\newtheorem{thm}{Theorem}[section]
\newtheorem{prob}[thm]{Problem}
\newtheorem{lem}{Lemma}[section]
\newtheorem{cor::3.1}{Corollary}[section]
\theoremstyle{definition}
\renewcommand\proofname{\bf Proof}
\begin{document}
\title{\LARGE{\bf $l$‐connectivity, $l$‐edge‐connectivity and spectral radius of graphs}}
\author{Dandan Fan$^{a,b}$, Xiaofeng Gu$^c$, Huiqiu Lin$^{a,}$\thanks{Corresponding author.\\
{\it E-mail}: {\tt ddfan0526@163.com} (D. Fan), {\tt xgu@westga.edu} (X. Gu), {\tt huiqiulin@126.com} (H. Lin)}\\[2mm]
\small\it $^a$ School of Mathematics, East China University of Science and Technology, \\
\small\it   Shanghai 200237, China\\[1mm]
\small\it $^b$ College of Mathematics and Physics, Xinjiang Agricultural University,\\
\small\it Urumqi, Xinjiang 830052, China\\[1mm]
\small\it $^c$ Department of Computing and Mathematics, University of West Georgia, \\
\small\it Carrollton, GA 30118, USA
}
\date{}
\maketitle

\begin{abstract}
Let $G$ be a connected graph. The toughness of $G$ is defined as $t(G)=\min\left\{\frac{|S|}{c(G-S)}\right\}$, in which the minimum is taken over all proper subsets $S\subset V(G)$ such that $c(G-S)\geq 2$ where $c(G-S)$ denotes the number of components of $G-S$. Confirming a conjecture of Brouwer, Gu [\textit{SIAM J. Discrete Math.} 35 (2021) 948--952] proved a tight lower bound on toughness of regular graphs in terms of the second largest absolute eigenvalue. Fan, Lin and Lu [\textit{European J. Combin.} 110 (2023) 103701] then studied the toughness of simple graphs from the spectral radius perspective. While the toughness is an important concept in graph theory, it is also very interesting to study $|S|$ for which $c(G-S)\geq l$ for a given integer $l\geq 2$. This leads to the concept of the $l$-connectivity, which is defined to be the minimum number of vertices of $G$ whose removal produces a disconnected graph with at least $l$ components or a graph with fewer than $l$ vertices. Gu [\textit{European J. Combin.} 92 (2021) 103255] discovered a lower bound on the $l$-connectivity of regular graphs via the second largest absolute eigenvalue. As a counterpart, we discover the connection between the $l$-connectivity of simple graphs and the spectral radius. We also study similar problems for digraphs and an edge version.
\end{abstract}

\begin{flushleft}
\textbf{Keywords:} $l$‐connectivity; $l$‐edge‐connectivity; spectral radius; eigenvalue
\end{flushleft}
\textbf{MSC 2020:} 05C50, 05C40

\section{Introduction}
We consider simple graphs. Connectivity is a fundamental concept in graph theory. The \textit{connectivity} $\kappa(G)$ of a graph $G$ is the minimum number of vertices whose removal results in a disconnected graph or in the trivial graph. One related concept is the toughness, introduced by Chv\'atal \cite{Chva73} in 1973 to study cycle structures. The \textit{toughness} $t(G)$ of a connected non-complete graph $G$ is defined as $t(G)=\min\left\{\frac{|S|}{c(G-S)}\right\}$, in which the minimum is taken over all proper subsets $S\subset V(G)$ such that $G-S$ is disconnected and $c(G-S)$ denotes the number of components of $G-S$. Graph toughness has applications on hamiltonicity, pancyclicity, factors, spanning trees, and many others; see \cite{BaBS06} for a survey on related results. 

On the other hand, Chartrand et al.~\cite{CKLL} introduced the concept of $l$-connectivity, as a generalization of the classic connectivity. For an integer $l\geq 2$, the \textit{$l$‐connectivity} $\kappa_{l}(G)$ of a graph $G$ is defined to be the minimum number of vertices of $G$ whose removal produces a disconnected graph with at least $l$ components or a graph with fewer than $l$ vertices. Note that $\kappa_{2}(G)$ is the classic connectivity $\kappa(G)$. The $l$-connectivity is closely related to the toughness $t(G)$ of a graph~\cite{DOS}. In fact $t(G) =\min_{2\leq l\leq\alpha}\kappa_{l}(G)/l$, where $\alpha$ is the independence number of $G$. 
The $l$‐connectivity has been well studied for various kinds of graphs, such as, random graphs~\cite{GGSH}, pseudo-random graphs~\cite{Gu21}, hypercubes~\cite{ZYZ}, generalized Petersen graphs~\cite{Feeeero-Hanusch}, among others.  As a significant tool, $l$‐connectivity is extensively employed in the robustness analysis of computer networks (See\cite{ZY,LMS,LLFJC}).

\bigskip
This paper focuses on the research from spectral perspectives. For a graph $G$ on $n$ vertices, let $A(G)$ denote  the adjacency matrix of $G$ and let $\lambda_i:=\lambda_i(G)$ denote the $i$-th largest eigenvalue of $A(G)$ for $i=1,2,\ldots, n$. Specifically, the largest eigenvalue of $A(G)$ is called the \textit{spectral radius} of $G$ and is denoted by $\rho(G)$. Let $\lambda = \max_{2\le i\le n} |\lambda_i| =\max\{|\lambda_2|, |\lambda_n|\}$, that is, $\lambda$ is the second largest absolute eigenvalue. Denote by $D(G)$ the diagonal matrix of vertex degrees of $G$. The \textit{Laplacian matrix} of $G$ is defined as $L(G)=D(G)-A(G)$. Let $\mu_i:=\mu_i(G)$ denote the $i$-th smallest eigenvalue of the Laplacian matrix of $G$. The Laplacian matrix is positive semidefinite and so $\mu_{n}(G)\geq\ldots\geq\mu_{2}(G)\geq\mu_{1}(G)=0$. For a $d$-regular graph on $n$ vertices, $\lambda_i=d-\mu_{i}$ for $1\leq i\leq n$.

Over the past several decades, the connections between the connectivity of graphs and eigenvalues of associated matrices have been well investigated. One of the well-known results of Fiedler\cite{Fiedler} implies that $\kappa(G)\geq \mu_{2}(G)$ for a non-complete graph $G$. This result was improved for regular graphs by Krivelevich and Sudakov~\cite{Krivelevich-Sudakov} who proved that for a $d$-regular graph $G$ on $n$ vertices with $d\le n/2$, $\kappa(G)\ge d - 36\lambda^2/d$. Other related results have been obtained by Cioab\u{a} and Gu \cite{Cioba-Gu}, Abiad et al.~\cite{ABMOZ18}, Hong, Xia and Lai~\cite{HXL19}, and O\cite{SO-1}, among others.


The relation between the toughness and eigenvalues was first studied by Alon~\cite{Alon95} who showed that for any connected $d$-regular graph $G$, $t(G)>\frac{1}{3}(\frac{d^2}{d\lambda+\lambda^2}-1)$. Brouwer~\cite{Brou95} independently proved that $t(G)>\frac{d}{\lambda}-2$ for any connected $d$-regular graph $G$, and he conjectured that $t(G) \ge\frac{d}{\lambda}-1$ in \cite{Brou95, Brou96}. Some partial results can be found in \cite{CiWo14, Cioba-Gu, Gu21}. This conjecture was confirmed in~\cite{Gu21b} and has been extended to irregular graphs by using Laplacian eigenvalues in~\cite{GH22}. More recently, a spectral radius condition for toughness was provided in \cite{FLL23}, wherein the authors established spectral conditions for a graph to be 1-tough with minimum degree $\delta$ and to be $t$-tough, respectively.

Since the $l$-connectivity is considered as both a generalization of the connectivity and an extension of the toughness, motivated by the above spectral results, we are interested in discovering a spectral condition for $l$-connectivity. The second author~\cite{Gu21} ever proved a lower bound on the $l$-connectivity of regular graphs in terms of the second largest absolute eigenvalue. Here we intend to study maximum spectral radius of graphs of order $n$ with a given minimum degree $\delta$ and $l$-connectivity $\kappa_l$. Denote by $'\vee'$ and $'\cup'$ the join and union products, respectively. Recall that the $l$‐connectivity $\kappa_{l}(G)$ is the minimum number of vertices of $G$ whose removal produces a disconnected graph with at least $l$ components or a graph with fewer than $l$ vertices. If the latter case, then it implies that $n=\kappa_l +l-1$ and the graph with maximum spectral radius is obtained from $K_1\cup K_{\kappa_l +l-2}$ by adding $\delta$ edges in between. Therefore we may only consider the former case and thus $n\geq \kappa_l +l$.

It is known that $\kappa(G)\leq\delta(G)$. Nevertheless, when $l\geq 3$, $\kappa_{l}(G)$ and $\delta(G)$ are not bound by such a restriction. For $\kappa_l>\delta$, let $G^{\kappa_l,\delta}_{n}$ be the graph obtained from $K_{1}\cup (K_{\kappa_l}\vee (K_{n-\kappa_l-l+1}\cup (l-2)K_1))$ by adding $\delta$ edges between the isolated vertex $K_1$ and $K_{\kappa_l}$. If $\kappa_l\leq \delta$, let $G^{\kappa_l,\delta}_{n} = K_{\kappa_l}\vee(K_{n-\kappa_l-(l-1)(\delta-\kappa_l+1)}\cup(l-1)K_{\delta-\kappa_l+1})$.

We have the following theorem. In the rest of the paper, we always assume $l$ is an integer at least $2$, unless otherwise stated.

\begin{thm}\label{thm::1.1}
Let $G$ be a connected graph of order $n\geq \kappa_l +l$ with minimum degree $\delta$ and $l$-connectivity $\kappa_l$. Then $\rho(G)\leq \rho(G^{\kappa_l,\delta}_{n})$, with equality if and only if $G\cong G^{\kappa_l,\delta}_{n}$.
\end{thm}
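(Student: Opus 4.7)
The plan is a standard spectral extremal argument in the style of \cite{FLL23} for toughness: normalize an extremal graph via Perron--Frobenius edge additions, then fix the component profile via a Kelmans-type shifting transformation. Let $G$ maximize $\rho$ among connected graphs of order $n\ge\kappa_l+l$ with $\delta(G)=\delta$ and $\kappa_l(G)=\kappa_l$; fix its Perron eigenvector $\mathbf{x}$ and a minimum $l$-separator $S$ with $|S|=\kappa_l$, writing $C_1,\ldots,C_p$ (with $p\ge l$) for the components of $G-S$.

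First I would densify $G$. Any missing edge inside $S$, inside some $C_i$, or between $S$ and a component $C_j$ that does not host a minimum-degree vertex, when added, preserves $\kappa_l(G)\le\kappa_l$ (since $S$ still $l$-separates) and keeps $\delta(G)=\delta$ (some minimum-degree witness is retained). By Perron--Frobenius each such addition strictly raises $\rho$, so extremality forces $S$ to be a clique, every $C_i$ to be a clique, and every $C_j$ without a minimum-degree witness to be fully joined to $S$.

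Next I identify the special component $C^{*}$ carrying the witness. If $\kappa_l\le\delta$, every vertex in a fully-joined clique-component has degree at least $\kappa_l$, so $\delta(G)=\delta$ forces some component to have size exactly $\delta-\kappa_l+1$, yielding $G=K_{\kappa_l}\vee(K_{n_1}\cup\cdots\cup K_{n_p})$ with $\min_i n_i=\delta-\kappa_l+1$. If $\kappa_l>\delta$, any witness $u$ satisfies $|C(u)|-1+|N(u)\cap S|=\delta<\kappa_l$, hence $|C(u)|\le\delta+1$, and a short perturbation (replacing $C(u)$ by a singleton joined to $\delta$ vertices of $S$ strictly raises $\rho$ while preserving $\delta$ and $\kappa_l$) forces $|C(u)|=1$. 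The low-degree witness thus sits alone in a singleton joined to exactly $\delta$ vertices of $S$, while every other component is fully joined to $S$.

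The final and most delicate step is a Kelmans-type shifting argument determining $p$ and the exact component sizes. On any clique-component $C_k$ fully joined to $S$, symmetry forces the Perron entries to be constant, say $y_k$, and the eigenvalue equation at a vertex of $C_k$ gives $y_k=\kappa_l z/(\rho+1-|C_k|)$, where $z$ is the Perron entry on $S$; in particular $y_k$ is strictly increasing in $|C_k|$. A direct Rayleigh-quotient calculation then shows that shifting a vertex from a smaller component $C_i$ to a larger component $C_j$ changes $\mathbf{x}^{\top}A\mathbf{x}$ by $2y_i[|C_j|y_j-(|C_i|-1)y_i]>0$, so $\rho$ strictly increases. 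Iterating such shifts, together with an absorption move that merges one small component entirely into the largest (valid whenever the result still has at least $l$ components), forces every non-largest component to attain the minimum allowed size and drives $p=l$, reproducing $G^{\kappa_l,\delta}_n$ in both cases. The equality statement then follows from the strictness of each step. The main obstacle is the bookkeeping required to maintain $\delta(G)=\delta$ throughout densification and shifting -- every operation must be routed through components and vertices that do not host a minimum-degree witness -- together with the ad hoc argument ruling out $|C(u)|\ge 2$ in the case $\kappa_l>\delta$ via an explicit perturbation.
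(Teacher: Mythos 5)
Your overall architecture matches the paper's: take a spectral maximizer, fix a minimum $l$-separator $S$ and the components of $G-S$, split on $\kappa_l>\delta$ versus $\kappa_l\le\delta$, use Perron--Frobenius edge moves to normalize the structure, and finish with a component-balancing step (your Rayleigh-quotient shift is exactly the content of the paper's Lemma~\ref{lem::2.3}, and your balancing correctly respects the lower bound $\delta-\kappa_l+1$ on component sizes in the case $\kappa_l\le\delta$). However, the proposal has genuine gaps at precisely the two places where the paper's proof does its real work.

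First, your densification step adds all missing edges inside $S$ and inside every $C_i$ unconditionally and claims $\delta$ is preserved because ``some minimum-degree witness is retained.'' That is not justified: if the (possibly unique) witness $u$ lies in $S$, or lies in a component in which it has non-neighbors, these additions raise $d(u)$ and may raise $\delta(G)$. This is exactly why the paper proves Claim~1 ($u\notin S$) and Claim~2 ($N_G(u)\subseteq S$), and why it only densifies $G-u$ rather than $G$. Second, and more seriously, the two operations you lean on in the case $\kappa_l>\delta$ --- ruling out $u\in S$ and ``replacing $C(u)$ by a singleton joined to $\delta$ vertices of $S$'' --- are not edge \emph{additions} but edge \emph{relocations} (deleting $uw$ for $w\in N(u)\cap C(u)$ and adding $uv$ for $v\in S\setminus N(u)$), so Perron--Frobenius alone gives nothing; to invoke a rotation lemma such as Lemma~\ref{lem::2.2} one must first prove that the Perron entries on $S\setminus N(u)$ dominate those on $N(u)\cap C(u)$. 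The paper establishes this via the eigenvalue equations, deriving $\frac{(\rho^2+\rho-\delta+t)(x(v_{t+1})-x_1)}{\rho}\ge 0$ and then $x(v_{t+1})\ge x_1$ from $\rho>\delta$; it also needs an extra reconfiguration (the graph $G_2$ in Claim~1) to restore the parameters $(\delta,\kappa_l)$ after the move. You flag this as ``the main obstacle'' but supply no argument, so the central case of the theorem remains unproved in your write-up.
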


It is worth mentioning that Theorem~\ref{thm::1.1} generalizes the result of Li et al.~\cite{LSCC} and independently Ye et al.~\cite{YFW10} who characterized the corresponding graph attaining the maximum spectral radius among all graphs with $\kappa(G)\leq k$ is $K_k\vee(K_1\cup K_{n-k-1})$, and the result of Lu and Lin~\cite{Lu-Lin} who proved that among all graphs with $\kappa(G)\leq k \leq \delta(G)$, the maximum spectral radius is obtained uniquely at $K_k\vee(K_{\delta-k+1}\cup K_{n-\delta-1})$.

\bigskip
We also consider an analogue for digraphs. The situation becomes more involved, since there are not many efficient tools for digraph spectra. For more about spectra of digraphs, we refer readers to the expository paper by Brualdi~\cite{Brualdi10}. 
Let $D=(V,E)$ be a digraph, where $V(D)$ and $E(D)$ are the vertex set and arc set of $D$, respectively. 
If $e=uv\in E(D)$, then $u$ is the initial vertex of $e$ and $v$ is the terminal vertex. We call $D$ \textit{strongly connected} if for every pair $x,y\in V(D)$, there exists a directed path from $x$ to $y$ and a directed path from $y$ to $x$. The \textit{complete digraph} $\overrightarrow{K}_{n}$ is a digraph of order $n$ in which every pair of distinct vertices $u,v$ is joined by an arc $uv$ and an arc $vu$. The \textit{$l$-connectivity} of a strongly connected digraph is the minimum number of vertices whose removal produces a digraph with at least $l$ strongly connected components or a digraph with fewer than $l$ vertices. For the later case, we can  deduce that $n=\kappa_l +l-1$ and the extremal graph attaining the maximum spectral radius is $\overrightarrow{K}_{\kappa_l +l-1}$. For the former case, let $\mathcal{D}_{n,\kappa_l}$ be the set of strongly connected digraphs of order $n\geq \kappa_l+l$ with $l$-connectivity $\kappa_l$. Lin et al.~\cite {LYZS} determined the maximum spectral radius among all digraphs in $\mathcal{D}_{n,\kappa_2}$ and characterized the corresponding spectral extremal graphs. We aim to solve the general problem in $\mathcal{D}_{n,\kappa_l}$ for an arbitrary integer $l\ge 2$. 

\begin{figure}[htb]
\centering
\begin{tikzpicture}[x=1.00mm, y=1.00mm, inner xsep=0pt, inner ysep=0pt, outer xsep=0pt, outer ysep=0pt, scale=0.6]
\path[line width=0mm] (39.94,19.30) rectangle +(120.88,84.72);
\definecolor{L}{rgb}{0,0,0}
\path[line width=0.30mm, draw=L] (98.00,89.58) ellipse (30.20mm and 12.44mm);
\definecolor{F}{rgb}{0,0,0}
\path[line width=0.30mm, draw=L, fill=F] (80.02,90.07) circle (1.00mm);
\path[line width=0.30mm, draw=L, fill=F] (89.06,90.09) circle (0.50mm);
\path[line width=0.30mm, draw=L, fill=F] (98.13,90.09) circle (0.50mm);
\path[line width=0.30mm, draw=L, fill=F] (107.46,90.09) circle (0.50mm);
\path[line width=0.30mm, draw=L, fill=F] (117.32,90.11) circle (1.00mm);
\path[line width=0.30mm, draw=L, fill=F] (59.92,60.09) circle (1.00mm);
\path[line width=0.30mm, draw=L, fill=F] (135.64,59.77) circle (1.00mm);
\path[line width=0.30mm, draw=L, fill=F] (90.65,32.63) circle (1.00mm);
\path[line width=0.30mm, draw=L, fill=F] (106.51,32.74) circle (1.00mm);
\path[line width=0.30mm, draw=L, fill=F] (94.95,32.68) circle (0.50mm);
\path[line width=0.30mm, draw=L, fill=F] (98.45,32.75) circle (0.50mm);
\path[line width=0.30mm, draw=L, fill=F] (102.28,32.77) circle (0.50mm);
\path[line width=0.30mm, draw=L] (79.91,89.97) -- (59.82,59.90);
\path[line width=0.30mm, draw=L] (79.91,90.16) -- (135.78,59.95);
\path[line width=0.30mm, draw=L] (117.38,90.16) -- (59.82,59.95);
\path[line width=0.30mm, draw=L] (117.25,90.16) -- (135.78,59.69);
\draw(128.82,89.92) node[anchor=base west]{\fontsize{9.23}{17.07}\selectfont $K_k$};
\draw(44.94,51.96) node[anchor=base west]{\fontsize{9.23}{17.07}\selectfont $K_{n_1}$};
\draw(93.01,22.38) node[anchor=base west]{\fontsize{9.23}{17.07}\selectfont $K_{n_2}$};
\draw(140.52,51.69) node[anchor=base west]{\fontsize{9.23}{17.07}\selectfont $K_{n_l}$};
\path[line width=0.30mm, draw=L] (59.94,53.05) ellipse (4.80mm and 11.28mm);
\path[line width=0.30mm, draw=L, fill=F] (60.01,56.49) circle (0.50mm);
\path[line width=0.30mm, draw=L, fill=F] (60.01,52.86) circle (0.50mm);
\path[line width=0.30mm, draw=L, fill=F] (60.01,49.23) circle (0.50mm);
\path[line width=0.30mm, draw=L, fill=F] (59.86,45.32) circle (1.00mm);
\path[line width=0.30mm, draw=L] (135.52,52.54) ellipse (4.21mm and 10.69mm);
\path[line width=0.30mm, draw=L, fill=F] (135.71,56.23) circle (0.50mm);
\path[line width=0.30mm, draw=L, fill=F] (135.84,52.73) circle (0.50mm);
\path[line width=0.30mm, draw=L, fill=F] (135.71,48.97) circle (0.50mm);
\path[line width=0.30mm, draw=L, fill=F] (135.84,45.21) circle (1.00mm);
\path[line width=0.30mm, draw=L] (80.10,89.89) -- (59.88,45.56);
\path[line width=0.30mm, draw=L] (117.30,90.12) -- (60.14,45.26);
\path[line width=0.30mm, draw=L] (80.10,90.12) -- (135.97,45.26);
\path[line width=0.30mm, draw=L] (117.30,89.86) -- (135.71,45.14);
\path[line width=0.30mm, draw=L] (109.53,35.73);
\path[line width=0.30mm, draw=L] (98.71,32.79) ellipse (12.10mm and 4.40mm);
\path[line width=0.30mm, draw=L] (80.19,89.99) -- (90.64,32.61);
\path[line width=0.30mm, draw=L] (80.01,89.99) -- (106.96,32.87);
\path[line width=0.30mm, draw=L] (117.41,90.18) -- (90.83,32.80);
\path[line width=0.30mm, draw=L] (117.59,89.99) -- (106.96,32.61);
\definecolor{L}{rgb}{1,0,0}
\path[line width=0.30mm, draw=L] (61.04,45.67) -- (134.64,59.69);
\definecolor{F}{rgb}{1,0,0}
\path[line width=0.30mm, draw=L, fill=F] (134.64,59.69) -- (131.76,59.86) -- (134.64,59.69) -- (132.02,58.48) -- (134.64,59.69) -- cycle;
\path[line width=0.30mm, draw=L] (61.13,45.58) -- (134.73,45.12);
\path[line width=0.30mm, draw=L, fill=F] (134.73,45.12) -- (131.94,45.84) -- (134.73,45.12) -- (131.93,44.44) -- (134.73,45.12) -- cycle;
\path[line width=0.30mm, draw=L] (61.13,60.15) -- (134.46,60.06);
\path[line width=0.30mm, draw=L, fill=F] (134.46,60.06) -- (131.66,60.76) -- (134.46,60.06) -- (131.66,59.36) -- (134.46,60.06) -- cycle;
\path[line width=0.30mm, draw=L] (61.04,59.97) -- (134.73,45.94);
\path[line width=0.30mm, draw=L, fill=F] (134.73,45.94) -- (132.11,47.16) -- (134.73,45.94) -- (131.85,45.78) -- (134.73,45.94) -- cycle;
\path[line width=0.30mm, draw=L] (60.76,59.79) -- (89.54,33.84);
\path[line width=0.30mm, draw=L, fill=F] (89.54,33.84) -- (87.93,36.24) -- (89.54,33.84) -- (86.99,35.20) -- (89.54,33.84) -- cycle;
\path[line width=0.30mm, draw=L] (61.13,59.69) -- (105.58,33.94);
\path[line width=0.30mm, draw=L, fill=F] (105.58,33.94) -- (103.51,35.95) -- (105.58,33.94) -- (102.81,34.73) -- (105.58,33.94) -- cycle;
\path[line width=0.30mm, draw=L] (91.65,32.51) -- (134.64,44.61);
\path[line width=0.30mm, draw=L, fill=F] (134.64,44.61) -- (131.76,44.53) -- (134.64,44.61) -- (132.14,43.18) -- (134.64,44.61) -- cycle;
\path[line width=0.30mm, draw=L] (107.33,33.24) -- (135.01,58.64);
\path[line width=0.30mm, draw=L, fill=F] (135.01,58.64) -- (132.47,57.26) -- (135.01,58.64) -- (133.42,56.23) -- (135.01,58.64) -- cycle;
\path[line width=0.30mm, draw=L] (91.65,33.24) -- (134.55,59.09);
\path[line width=0.30mm, draw=L, fill=F] (134.55,59.09) -- (131.79,58.25) -- (134.55,59.09) -- (132.51,57.05) -- (134.55,59.09) -- cycle;
\definecolor{L}{rgb}{0,0,0}
\definecolor{F}{rgb}{0,0,0}
\path[line width=0.30mm, draw=L, fill=F] (121.99,44.35) circle (0.50mm);
\path[line width=0.30mm, draw=L, fill=F] (127.12,48.93) circle (0.50mm);
\path[line width=0.30mm, draw=L, fill=F] (116.86,40.50) circle (0.50mm);
\definecolor{L}{rgb}{1,0,0}
\path[line width=0.30mm, draw=L] (60.94,44.97) -- (89.54,32.32);
\definecolor{F}{rgb}{1,0,0}
\path[line width=0.30mm, draw=L, fill=F] (89.54,32.32) -- (87.26,34.10) -- (89.54,32.32) -- (86.70,32.82) -- (89.54,32.32) -- cycle;
\path[line width=0.30mm, draw=L] (60.94,45.16) -- (104.94,33.06);
\path[line width=0.30mm, draw=L, fill=F] (104.94,33.06) -- (102.43,34.47) -- (104.94,33.06) -- (102.06,33.12) -- (104.94,33.06) -- cycle;
\path[line width=0.30mm, draw=L] (107.33,32.51) -- (136.11,44.24);
\path[line width=0.30mm, draw=L, fill=F] (136.11,44.24) -- (133.25,43.83) -- (136.11,44.24) -- (133.78,42.53) -- (136.11,44.24) -- cycle;
\end{tikzpicture}%
\caption{$\overrightarrow{G}^{k}_{n_1,n_2,\cdots,n_{l}}$.}\label{fig-1}
\end{figure}

Denote by $D_1\nabla D_2$ the digraph obtained from two disjoint digraphs $D_1, D_2$ with vertex set $V(D_1)\cup V(D_2)$ and arc set $E(D_1)\cup E(D_2)\cup \{uv, vu|~u\in V(D_1), v\in V(D_2)\}$. Let $\overrightarrow{G}^{k}_{n_1,n_2,\cdots,n_{l}}=\overrightarrow{K}_{k}\nabla (\overrightarrow{K}_{n_1}\cup \overrightarrow{K}_{n_2}\cup \cdots \cup \overrightarrow{K}_{n_l})\cup F$, where $F=\{vu|~v\in V(\overrightarrow{K}_{n_i}), u\in V(\overrightarrow{K}_{n_j})\}$ for all $1\leq i<j\leq l$ and $l\geq 2$, as shown in Figure 1. Particularly, let $\overrightarrow{G}_n^{\kappa_l}=\overrightarrow{G}^{\kappa_l}_{n_1,n_2,\cdots,n_{l}}$ with exactly one of $n_1, n_2, \cdots, n_{l}$ equals $n-\kappa_l -l+1$ and all other $n_i$'s are equal to $1$.

\begin{thm}\label{thm::1.2}
The digraph $\overrightarrow{G}^{\kappa_l}_{n}$ maximizes the spectral radius among all digraphs in $\mathcal{D}_{n,\kappa_l}$.
\end{thm}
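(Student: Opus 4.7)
The plan has three stages, in the spirit of the $l=2$ analysis by Lin et al.~\cite{LYZS}.

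\emph{Stage 1: reduce to exactly $l$ components.} By the definition of $l$-connectivity, there exists $S\subseteq V(G)$ with $|S|=\kappa_l$ such that $G-S$ has at least $l$ strongly connected components $V_1,\ldots,V_{l'}$. If $l'>l$, I would merge two of them by inserting a pair of oppositely directed arcs; this strictly increases $\rho$ by Perron--Frobenius (since the new digraph stays strongly connected and its adjacency matrix strictly dominates the old one entrywise), while $S$ still separates the new digraph into $l'-1\geq l$ strongly connected components. Since adding arcs can only increase the $l$-connectivity, the value remains $\kappa_l$. Iterating, I may assume $G-S$ has exactly $l$ components.

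\emph{Stage 2: fill in the canonical arc pattern.} Any missing arc within $S$, within a single $V_i$, or between $S$ and some $V_i$ does not affect the SCC structure of $G-S$, hence may be added safely and strictly raises $\rho$. For distinct $V_i$ and $V_j$, arcs cannot appear in both directions (otherwise $V_i\cup V_j$ would form a single SCC in $G-S$), but all $n_in_j$ arcs in one direction are harmless. Thus the condensation of $G-S$ onto $\{V_1,\ldots,V_l\}$ is an acyclic tournament, and therefore the unique transitive tournament on $l$ vertices; after relabeling along a topological order, the extremal digraph takes the form $G=\overrightarrow{G}^{\kappa_l}_{n_1,\ldots,n_l}$ for some positive composition $(n_1,\ldots,n_l)$ of $n-\kappa_l$.

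\emph{Stage 3: optimize over the composition.} The partition $\{S,V_1,\ldots,V_l\}$ is equitable, so $\rho(G)$ equals the Perron root of the $(l+1)\times(l+1)$ quotient matrix
\[
B=\begin{pmatrix}
\kappa_l-1 & n_1 & n_2 & \cdots & n_l\\
\kappa_l & n_1-1 & n_2 & \cdots & n_l\\
\kappa_l & 0 & n_2-1 & \cdots & n_l\\
\vdots & \vdots & \vdots & \ddots & \vdots\\
\kappa_l & 0 & 0 & \cdots & n_l-1
\end{pmatrix}.
\]
A short Perron vector calculation shows that the right Perron entries satisfy $x_0=x_1>x_2>\cdots>x_l>0$, which identifies ``shift mass toward $n_1$'' as the correct direction. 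The key step is then a unit-transfer lemma: if $n_j\geq 2$ for some $j\geq 2$, then replacing $(n_1,n_j)$ with $(n_1+1,n_j-1)$ strictly increases $\rho(B)$. Iterating drives every $n_j$ with $j\geq 2$ down to $1$ and leaves $n_1=n-\kappa_l-l+1$, yielding $\overrightarrow{G}^{\kappa_l}_{n}$. I plan to establish the lemma by expanding $\det(tI-B)$ along the row indexed by $V_j$, isolating the dependence on $n_j$ as an explicit polynomial factor, and signing the difference of the two characteristic polynomials at the Perron root of the smaller configuration. This step is the technical crux of the proof: because $B$ is not symmetric and the arcs of $F$ are directed, the $n_i$'s play asymmetric roles, so the optimization is not a clean majorization statement and cannot be reduced to Schur convexity; carefully combining Perron positivity, the identity $x_0=x_1$, and the polynomial bookkeeping is where the real work lies.
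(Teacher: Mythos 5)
Your proposal follows essentially the same route as the paper: reduce to the canonical form $\overrightarrow{G}^{\kappa_l}_{n_1,\dots,n_l}$ via maximality of the spectral radius, the Bondy--Murty ordering of strong components and arc addition (the paper's Lemmas 3.2 and 3.3), and then apply a unit-transfer lemma proved by comparing characteristic polynomials of the equitable quotient matrices at the Perron root (the paper's Lemma 3.4). One simplification you did not anticipate: the quotient's characteristic polynomial factors as $(\lambda+1)\bigl(\prod_{i=1}^{l}(\lambda-(n_i-1))-\kappa_l(\lambda+1)^{l-1}\bigr)$, which is symmetric in the $n_i$, so the asymmetry of the arc set $F$ washes out and the transfer step is cleaner than the ``polynomial bookkeeping'' you describe.
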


Analogous to the $l$‐connectivity, Boesch and Chen~\cite{Boesch-Chen} defined the \textit{$l$‐edge‐connectivity} $\kappa'_{l}(G)$ of a graph $G$ on at least $l$ vertices to be the minimum number of edges whose removal leaves a graph with at least $l$ components. 
Clearly,  $\kappa'_{2}(G)$ is exactly the classic edge connectivity $\kappa'(G)$. The $l$-edge-connectivity is also related to edge-disjoint spanning trees. Let $\tau(G)$ denote the maximum number of edge-disjoint spanning trees in $G$, which is also called the \textit{spanning tree packing number} of $G$. From the spanning tree packing theorem by Nash-Williams~\cite{Nash-Williams} and independently by Tutte~\cite{Tutte61}, we have $\kappa'_l(G)\ge \tau(G) (l-1)$ and $\tau(G)=\min_{2\le l\le n} \kappa'_l(G)/(l-1)$.

The edge connectivity and the spanning tree packing number have been well studied from spectral perspectives. In particular, Chandran~\cite{Chandran}, Krivelevich and Sudakov~\cite{Krivelevich-Sudakov}, and Cioab\u{a}~\cite{Cioaba-1} discovered the connections between $\lambda_2$ and $\kappa'$ for regular graphs. Gu et al.~\cite{Gu-Lai} generalized Cioab\u{a}'s result to graphs that are not necessarily regular (see also \cite{Gu-1}). Most recently, O et al.\cite{SPPZ} provided a sharp result on edge connectivity of regular graphs in terms of $\lambda_2$. Cioab\u{a} and Wong~\cite{CiWo12} initiated the research on spanning tree packing number of regular graphs via $\lambda_2$ and made a conjecture. Gu et al.~\cite{Gu-Lai} partially solved the conjecture and extended it to general graphs. The conjecture was finally settled by Liu et al.~\cite{Liu-Hong} and the result was improved by Liu, Lai and Tian~\cite{Liu-Lai} with a Moore function.


Furthermore, Ning, Lu and Wang \cite{Ning} studied the edge connectivity with the spectral radius and proposed a conjecture: $B_{n,\delta+1}^{\kappa'}$ is the graph with the maximum spectral radius among all graphs of order $n$ with minimum degree $\delta$ and edge-connectivity $\kappa'$, where $B_{n,\delta+1}^{\kappa'}$ is a graph obtained from $K_{\delta+1}\cup K_{n-\delta-1}$ by adding $\kappa'$ edges joining a vertex in $K_{\delta+1}$ and $\kappa'$ vertices in $K_{n-\delta-1}$, and they verified this conjecture for the cases $0\leq \kappa'\leq 3$. Very recently, we confirmed this conjecture for $n\geq 2\delta+4$ in \cite{Fan-Gu-Lin}. In the same paper, we also investigated the spanning tree packing number and proved that for a connected graph $G$ with minimum degree $\delta\geq 2k\geq 4$ and order $n\geq 2\delta+3$, if $\rho(G)\geq \rho(B_{n,\delta+1}^{k-1})$, then $\tau(G)\geq k$ unless $G\cong B_{n,\delta+1}^{k-1}$.

Naturally, we will determine the graph(s) with the maximal spectral radius among all graphs with the given $l$‐edge‐connectivity. Let $H^{a,b}_{n}$ be a graph obtained from $K_{a-b+2}\vee(K_1\cup K_{n-a-1})$ by attaching $b-2$ pendent edges to some vertex in $K_{a-b+2}$, where $a\geq b-1$. Observe that $l\leq \kappa'_l+1$ is a trivial necessary condition for a connected graph with $l$‐edge‐connectivity $\kappa'_l$. We obtain the following result that generalizes the work of Ye et al.~\cite{YFW10} who proved the case $l=2$.

\begin{thm}\label{thm::1.3}
Let $G$ be a connected graph of order $n\geq 2\kappa'_l+2$ with $l$‐edge‐connectivity $\kappa'_l\geq l-1$. Then $$\rho(G)\leq \rho(H^{\kappa'_l,l}_{n}),$$
with equality if and only if $G\cong H^{\kappa'_l,l}_{n}$.
\end{thm}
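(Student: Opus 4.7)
The plan is to identify the structural properties of an extremal graph $G^*$ attaining the maximum spectral radius among connected graphs of order $n$ with $l$-edge-connectivity equal to $\kappa'_l$, and then to show $G^* \cong H^{\kappa'_l, l}_n$. I fix a minimum $l$-edge-cut $F \subseteq E(G^*)$ of size $\kappa'_l$, write the $l$ components of $G^* - F$ as $G_1, \ldots, G_l$, and set $n_i = |V(G_i)|$ with $n_1 \leq n_2 \leq \cdots \leq n_l$.

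\textbf{Cliquifying the components.} First I show each $G_i$ is a clique. For non-adjacent $u, v$ in the same $G_i$, adding $uv$ preserves $\kappa'_l$: the original cut $F$ is still a valid $l$-edge-cut of $G^*+uv$, so $\kappa'_l(G^*+uv)\le\kappa'_l$, and any hypothetical $l$-edge-cut $F'$ in $G^*+uv$ of size less than $\kappa'_l$ produces one in $G^*$ of size $|F'|$ (if $uv\notin F'$) or of size $|F'|-1$ (if $uv\in F'$, since $F'\setminus\{uv\}$ already cuts $G^*$ into at least $l$ components), contradicting $\kappa'_l(G^*)=\kappa'_l$. Since Perron--Frobenius gives $\rho(G^*+uv)>\rho(G^*)$, extremality forces $G_i\cong K_{n_i}$.

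\textbf{Forcing singleton components.} Next I argue $n_1=\cdots=n_{l-1}=1$. Let $x$ be the Perron vector of $G^*$. If some $G_i$ with $i<l$ has $n_i\ge 2$, pick $v\in V(G_i)$ with smallest $x_v$ and $w\in V(G_l)$ with largest $x_w$, and construct $G'$ by deleting the edges within $G_i$ incident to $v$ and reattaching $v$ as a pendant of $w$. A Rayleigh-quotient computation using $x_w\ge x_v$ yields $\rho(G')>\rho(G^*)$, and the hypothesis $n\ge 2\kappa'_l+2$ ensures that $K_{n_l}$ is large enough to absorb $v$ without lowering the $l$-edge-connectivity, so $\kappa'_l(G')=\kappa'_l$, contradicting extremality. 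Hence the component sizes are $(1,\ldots,1,n-l+1)$.

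\textbf{Cut structure and attachment points.} With singleton components $v_1,\ldots,v_{l-1}$ and a big clique $K_{n-l+1}$, the cut $F$ consists of $\kappa'_l$ edges incident to the singletons, since any inter-singleton edge could be shifted to $K_{n-l+1}$ by the same type of rearrangement. Letting $d_i\ge 1$ be the number of neighbors of $v_i$ in $K_{n-l+1}$, we have $\sum_{i=1}^{l-1}d_i = \kappa'_l$. A further Perron-rearrangement argument (transferring edges from a low-degree singleton to another) shows the extremum requires one singleton of degree $\kappa'_l-l+2$ and $l-2$ pendants. Finally, a Kelmans-type transformation pins down the attachment points: the $l-2$ pendants share a common neighbor $u$ in $K_{n-l+1}$, and the $\kappa'_l-l+2$ neighbors of the remaining singleton form a clique $K_{\kappa'_l-l+2}$ containing $u$. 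This yields $G^*\cong H^{\kappa'_l,l}_n$, and both the inequality and the equality case follow.

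The main obstacle is verifying that $\kappa'_l$ is preserved exactly under each of the three structural modifications (vertex transfer, edge concentration among singletons, and Kelmans-type shifting). Here the hypothesis $n\ge 2\kappa'_l+2$ is crucial: it guarantees $|V(K_{n-l+1})|\ge\kappa'_l+1$, so no local modification can inadvertently create an $l$-edge-cut of size less than $\kappa'_l$ somewhere else in the graph, and Perron-entry monotonicity provides the strict spectral gain claimed by each rearrangement estimate.
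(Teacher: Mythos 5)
Your overall strategy (take an extremal graph, cliquify the components of a minimum $l$-edge-cut, reduce to one large clique plus singletons, then pin down the attachment of the cut edges) matches the skeleton of the paper's argument, and your cliquification step is correct. However, two of your three structural reductions are asserted rather than proved, and as described they do not go through.

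First, the step forcing $n_1=\cdots=n_{l-1}=1$ is wrong as stated. Deleting the $n_i-1$ clique edges at $v$ inside $G_i$ and reattaching $v$ as a pendant of $w$ removes $n_i-1$ edges and adds only one; the Rayleigh-quotient perturbation is $2x_v\bigl(x_w-\sum_{u\in N_{G_i}(v)}x_u\bigr)$, which the single inequality $x_w\ge x_v$ does not make positive once $n_i\ge 3$ (and even for $n_i=2$ you must compare $x_w$ with the correct neighbor, not with $x_v$). Moreover, creating a pendant vertex can genuinely \emph{decrease} the $l$-edge-connectivity (the new vertex can be severed with a single edge), so $G'$ need not lie in the competing class at all; invoking $n\ge 2\kappa'_l+2$ does not repair this. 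The paper avoids local surgery here entirely: it first anchors $\rho(G)\ge\rho(H^{\kappa'_l,l}_n)>n-l$, then shows that if two components have order at least $2$ the edge count satisfies $e(G)\le\binom{n-p}{2}+\kappa'_l+1$, so Hong's bound $\rho(G)\le\sqrt{2e(G)-n+1}$ forces $\rho(G)<n-p\le n-l$, a contradiction. Some such global counting (or a correctly quantified rearrangement) is needed; your local move does not supply it.

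Second, the final step --- showing the cut edges concentrate as $\kappa'_l-l+2$ edges at one singleton plus $l-2$ pendant edges, with the stated attachment pattern --- is the hardest part of the theorem (the paper's Lemma~\ref{lem::4.3}), and you dispose of it in one sentence. The difficulty is that transferring cut edges between singletons removes edges at some vertices of $K_{n-p+1}$ and adds them at others, so the single-vertex rotation lemma (Lemma~\ref{lem::2.2}) does not apply directly; the paper has to compare the two candidate graphs via $y^{T}(\rho'-\rho)x$ using \emph{both} Perron vectors, together with the two-sided localization $n-p<\rho<n-p+1$ from Lemma~\ref{lem::4.2}, to prove the resulting sign is positive. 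A ``further Perron-rearrangement argument'' and a ``Kelmans-type transformation'' are not substitutes for this estimate, and without it the uniqueness claim in the equality case is unsupported. (Your opening assumption that a minimum $l$-edge-cut leaves exactly $l$ components is in fact justifiable by restoring cut edges one at a time, but you should say so; the paper instead allows $p\ge l$ components and rules out $p>l$ spectrally at the very end.)
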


\section{Proof of Theorem \ref{thm::1.1}}

For any $v\in V(G)$, let $N_{G}(v)$ and $d_G(v)$ be the neighborhood and degree of $v$, respectively. Particularly, let $N_{G}[v]=N_{G}(v)\cup\{v\}$.
\begin{lem}[See \cite{H.L-1}]\label{lem::2.2}
Let $G$ be a connected graph, and let $u,v$ be two vertices of $G$. Suppose that $v_{1},v_{2},\ldots,v_{s}\in N_{G}(v)\backslash N_{G}(u)$ with $s\geq 1$, and $G^*$ is the graph obtained from $G$ by deleting the edges $vv_{i}$ and adding the edges $uv_{i}$ for $1\leq i\leq s$. Let $x$ be the Perron vector of $A(G)$. If $x(u)\geq x(v)$, then $\rho(G)<\rho(G^*)$.
\end{lem}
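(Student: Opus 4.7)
The plan is to prove the inequality by a direct Rayleigh quotient comparison, and then to upgrade the resulting weak inequality to a strict one by examining the eigenvalue equation of $A(G^{*})$ at the distinguished vertex $u$.

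First I would invoke the variational (Rayleigh--Ritz) bound $\rho(G^{*})\ge x^{\top}A(G^{*})x/(x^{\top}x)$, valid for any nonzero test vector, and specialize it to $x$, the Perron vector of $A(G)$. The key computation is the difference of the two quadratic forms. Because $G^{*}$ is obtained from $G$ by deleting the $s$ edges $vv_{1},\ldots,vv_{s}$ and adding the $s$ edges $uv_{1},\ldots,uv_{s}$, and because each $v_{i}\in N_{G}(v)\setminus N_{G}(u)$ (so in particular $v_{i}\ne u,v$ and no edge is doubly modified), the quadratic form picks up exactly one cross-term per vertex $v_{i}$:
$$x^{\top}A(G^{*})x \;-\; x^{\top}A(G)x \;=\; 2\sum_{i=1}^{s} x(v_{i})\bigl(x(u)-x(v)\bigr).$$
By the Perron--Frobenius theorem applied to the connected graph $G$, the vector $x$ is entrywise positive, so each $x(v_{i})>0$; combined with the hypothesis $x(u)\ge x(v)$, the right-hand side is nonnegative. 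Using $x^{\top}A(G)x=\rho(G)\,x^{\top}x$, this gives $\rho(G^{*})\ge \rho(G)$.

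To upgrade to a strict inequality, I would split into two cases. If $x(u)>x(v)$, the displayed difference is strictly positive (since $s\ge 1$ and $x(v_{1})>0$), and the Rayleigh bound immediately yields $\rho(G^{*})>\rho(G)$. The remaining case $x(u)=x(v)$ is the only subtle point, and I would handle it by contradiction. Assume $\rho(G^{*})=\rho(G)$. Then $x$ attains the maximum value $\rho(G^{*})$ of the Rayleigh quotient of $A(G^{*})$, so by the equality case of Rayleigh--Ritz $x$ is an eigenvector of $A(G^{*})$ with eigenvalue $\rho(G)$, i.e.\ $A(G^{*})x=\rho(G)x$. But evaluating the left-hand side at the vertex $u$, whose neighborhood in $G^{*}$ is $N_{G}(u)\cup\{v_{1},\ldots,v_{s}\}$, gives
$$(A(G^{*})x)(u)\;=\;(A(G)x)(u)+\sum_{i=1}^{s}x(v_{i})\;=\;\rho(G)x(u)+\sum_{i=1}^{s}x(v_{i})\;>\;\rho(G)x(u),$$
a contradiction because $s\ge 1$ and each $x(v_{i})>0$. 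Hence $\rho(G^{*})>\rho(G)$ in both cases.

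The main (and mild) obstacle is exactly this equality case $x(u)=x(v)$: the algebraic gain from the edge swap vanishes, so one cannot read off strictness from Rayleigh--Ritz alone. The resolution is the observation above, namely that $u$ genuinely acquires new positive-weighted neighbors in $G^{*}$, which forces the eigenvalue equation at $u$ to fail and rules out the possibility that $x$ remains a Perron-type vector for $G^{*}$ at the same eigenvalue. Everything else is a routine quadratic form expansion together with the positivity of the Perron vector.
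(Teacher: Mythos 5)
The paper does not actually prove this lemma --- it is imported verbatim from \cite{H.L-1} --- and your argument is essentially the standard proof from that literature: a Rayleigh-quotient comparison using the Perron vector $x$ of $A(G)$ to get $\rho(G^*)\ge\rho(G)$, followed by a contradiction in the eigenvalue equation at $u$ (whose $G^*$-neighborhood strictly grows by the positively weighted vertices $v_1,\dots,v_s$) to rule out equality. Your proof is correct, including the only delicate point (the case $x(u)=x(v)$, handled via the equality case of Rayleigh--Ritz, which needs no connectivity assumption on $G^*$), so there is nothing to fix.
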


\begin{lem}[See \cite{D.F}]\label{lem::2.3}
Let $n=\sum_{i=1}^t n_i+s$. If $n_{1}\geq n_{2}\geq \cdots\geq n_{t}\geq p$ and $n_{1}<n-s-p(t-1)$, then
$$\rho(K_{s} \vee (K_{n_{1}}\cup K_{n_{2}}\cup \cdots \cup K_{n_{t}}))<\rho(K_{s} \vee (K_{n-s-p(t-1)}\cup (t-1)K_{p})).$$
\end{lem}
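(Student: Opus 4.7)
The plan is to prove the inequality by a one-vertex transfer argument that strictly increases the spectral radius at each step. First I would exploit the equitable partition of $G := K_s \vee (K_{n_1} \cup \cdots \cup K_{n_t})$ into $V(K_s)$ and the cells $V(K_{n_i})$. The Perron vector $x$ of $A(G)$ is constant on each cell; writing $y$ for its common value on $K_s$ and $x_i$ for its common value on $K_{n_i}$, the eigenequation restricted to $K_{n_i}$ reads $(\rho - n_i + 1)x_i = sy$. Since $G$ contains $K_{s+n_1}$ as a subgraph, $\rho \geq s + n_1 - 1 > n_i - 1$, so $x_i = sy/(\rho - n_i + 1)$ is strictly increasing in $n_i$. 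In particular, $n_1 \geq n_j$ forces $x_1 \geq x_j > 0$.

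Next, I would define the key one-step transformation: pick any index $j \geq 2$ with $n_j > p$, choose $v \in V(K_{n_j})$, and let $G^*$ be obtained from $G$ by deleting the $n_j - 1$ edges from $v$ to $V(K_{n_j}) \setminus \{v\}$ and adding the $n_1$ edges from $v$ to $V(K_{n_1})$. A direct check shows
\[
G^* \cong K_s \vee \bigl(K_{n_1 + 1} \cup K_{n_2} \cup \cdots \cup K_{n_j - 1} \cup \cdots \cup K_{n_t}\bigr),
\]
which has the same shape and still satisfies the lower bound $p$ on each clique size. Using the Perron vector $x$ of $G$, one computes
\[
x^{T}A(G^*)x - x^{T}A(G)x \;=\; 2x_j\bigl[n_1 x_1 - (n_j - 1)x_j\bigr] \;\geq\; 2(n_1 - n_j + 1)\, x_j^{2} \;>\; 0,
\]
so the Rayleigh principle gives $\rho(G^*) \geq x^{T}A(G^*)x / x^{T}x > \rho(G)$.

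Finally, I would iterate. The assumption $n_1 < n - s - p(t-1)$ is equivalent to $\sum_{i\geq 2} n_i > p(t-1)$, which forces some $n_j > p$ with $j \geq 2$ and so guarantees that the transformation is applicable. Each application preserves the sum $\sum_i n_i = n-s$, preserves $n_i \geq p$ for every $i$, and increases the largest part by exactly one; after $n - s - p(t-1) - n_1$ iterations the configuration is $(n - s - p(t-1), p, \ldots, p)$. Chaining the strict inequalities $\rho < \rho(G^*) < \cdots$ yields the desired $\rho(G) < \rho\bigl(K_s \vee (K_{n - s - p(t-1)} \cup (t-1)K_p)\bigr)$.

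The main obstacle is verifying that $G^*$ really has the claimed clean structure (so the induction target is reached) and that the monotonicity $x_1 \geq x_j$ really does hold in the Perron vector; both reduce to the explicit formula $x_i = sy/(\rho - n_i + 1)$. As a cross-check, one could alternatively express $\rho(G)$ as the largest root of the characteristic polynomial of the $(t+1) \times (t+1)$ quotient matrix associated with the equitable partition and verify monotonicity of this root as a function of $(n_1, \ldots, n_t)$ via implicit differentiation, but the Rayleigh route above appears to be the most economical.
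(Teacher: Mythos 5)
Your proof is correct; note, however, that a comparison with ``the paper's own proof'' is moot here, because the paper never proves Lemma~\ref{lem::2.3} --- it imports it from \cite{D.F}. Judged on its own, your argument holds up: the Perron vector is constant on the cells of the equitable partition (justify this via vertex-transitivity of each cell under automorphisms, or via Lemma~\ref{lem::3.1} together with simplicity of the Perron root of an irreducible nonnegative matrix, rather than as a bare consequence of equitability); the eigenequation then gives $x_i = sy/(\rho-n_i+1)$, hence $x_1\ge x_j>0$; transferring one vertex $v$ from a clique $K_{n_j}$ with $n_j>p$, $j\ge 2$, into $K_{n_1}$ does produce exactly $K_s\vee(K_{n_1+1}\cup K_{n_2}\cup\cdots\cup K_{n_j-1}\cup\cdots\cup K_{n_t})$, since $v$ becomes adjacent to all of $V(K_{n_1})\cup V(K_s)$ and to nothing else; the Rayleigh computation $x^{T}A(G^*)x-x^{T}A(G)x=2x_j\bigl[n_1x_1-(n_j-1)x_j\bigr]\ge 2(n_1-n_j+1)x_j^2>0$ is right; and the hypothesis $n_1<n-s-p(t-1)$, equivalently $\sum_{i\ge 2}n_i>p(t-1)$, guarantees a legal step exists precisely until the configuration $(n-s-p(t-1),p,\ldots,p)$ is reached, with every part staying at least $p$ throughout. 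Your route is in fact the same one-vertex-transfer strategy that the paper itself uses for the digraph analogue, Lemma~\ref{lem::3.4}; the difference is that there the strict increase is certified by comparing characteristic polynomials of equitable quotient matrices, whereas you certify it with the Perron vector of the smaller graph and the Rayleigh principle, which is more elementary and avoids locating roots of the quotient polynomial. One point to make explicit: you need $s\ge 1$ (so that $G$ is connected and $A(G)$ irreducible) both for the Perron-vector formula and for $\rho\ge s+n_1-1>n_i-1$; this is implicit in the lemma's use in the paper, where $s=\kappa_l\ge 1$, but your write-up should say so.
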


For any vertex $v\in V(G)$ and any subset $S\subseteq V(G)$, let $N_{S}(v)=N_{G}(v) \cap S$ and $d_{S}(v)=|N_{G}(v) \cap S|$. Now we shall give a proof of Theorem \ref{thm::1.1}.
\renewcommand\proofname{\bf Proof of Theorem \ref{thm::1.1}}
\begin{proof}
Suppose that $G$ is a connected graph attaining the maximum spectral radius among all connected graph with $l$-connectivity $\kappa_l$ and minimum degree $\delta$. Since $n\geq \kappa_l +l$, by the definition of $l$-connectivity, there exists some nonempty subset $S\subseteq V(G)$ with $|S|=\kappa_{l}$ such that $G-S$ contains at least $l$ components. Let $C_1, C_2, \ldots, C_q$ be the components of $G-S$ where $q\geq l$, and let $|V(C_i)|=n_i$ for $1\leq i\leq q$.  Now, we divide the proof into the following two cases.

{\flushleft\bf{Case 1.}} $\kappa_l>\delta$.

Choose a vertex $u\in V(G)$ such that $d_{G}(u)=\delta$. Suppose that $d_{S}(u)=t$. 

{\bf Claim 1.} $u\notin S.$

{\bf Proof of Claim 1.}
Otherwise, $u\in S$. If $t=\delta$, then $N_{G}(u)\subseteq S$. Let $S'=S-u$. Then $|S'|=\kappa_l-1$ and $G-S'$ contains $q+1$ components $u, C_1, C_2,\cdots, C_q$, contradicting the $l$-connectivity $\kappa_l$ of $G$. Thus we may assume $t\leq \delta-1$. By the maximality of $\rho(G)$, we can obtain that $G-u\cong K_{\kappa_l-1}\vee (K_{n_1}\cup K_{n_2}\cup\cdots\cup K_{n_q})$. Note that $d_{S}(u)=t$ and $|N_{G}(u)\cap (V(G)\backslash S)|\geq 1$. Let $N_{S}(u)=\{v_1,v_2,\cdots,v_{t}\}$ and $S\backslash N_{S}[u]=\{v_{t+1},v_{t+2},\cdots,v_{\kappa_l-1}\}$, and let $P_{i}=N_{G}(u)\cap V(K_{n_i})$ and $|P_i|=p_{i}$ where  $1\leq i\leq q$. Then $\sum_{i=1}^{q}p_i=\delta-t$. Assume that $x$ is the Perron vector of $A(G)$. By symmetry, we say $x(v)=x_i$ for $ v\in P_{i}$ and $x(v)=x'_{i}$ for $v\in V(K_{n_i})\backslash P_i$ where $1\leq i\leq q$. Without loss of generality, we may assume that $x_{1}=\max\{x_i|~1\leq i\leq q\}$. Note that $x(v_{1})=x(v_{i})$ for $2\leq i\leq t$ and $x(v_{t+1})=x(v_{i})$ for $t+2\leq i\leq \kappa_l-1$. Then, from $A(G)x=\rho(G)x$, we have
\begin{eqnarray}
\rho(G)x_{i}&\!\!=\!\!&tx(v_1)+(\kappa_l-t-1)x(v_{t+1})+(p_i-1)x_i+(n_i-p_i)x'_{i}+x(u),\label{equ::1}\\
\rho(G)x'_{i}&\!\!=\!\!&tx(v_1)+(\kappa_l-t-1)x(v_{t+1})+p_ix_i+(n_i-p_i-1)x'_{i}, \label{equ::2}\\
\rho(G)x(u)&\!\!=\!\!&tx(v_1)+\sum_{i=1}^{q}p_ix_i,\label{equ::3}\\
\rho(G)x(v_{t+1})&\!\!=\!\!& tx(v_1)+(\kappa_l-t-2)x(v_{t+1})+\sum_{i=1}^{q}p_ix_i+\sum_{i=1}^{q}(n_i-p_i)x'_i,\label{equ::4}
\end{eqnarray}
where $1\leq i\leq q$. From (\ref{equ::1}), (\ref{equ::2}) and (\ref{equ::3}), we obtain
\begin{equation*}
\begin{aligned}
&~~~\rho(G)\Big(\sum_{i=2}^{q}p_ix_i+\sum_{i=2}^{q}(n_i-p_i)x'_i-x(u)\Big)\\
&=\Big(\sum_{i=2}^{q}n_i\!-\!1\Big)tx(v_1)\!+\!\sum_{i=2}^{q}
n_i(\kappa_l\!-\!t\!-\!1)x(v_{t+1})\!+\!\sum_{i=2}^{q}(n_i-1)p_ix_i+\sum_{i=2}^{q}(n_i-1)(n_i-p_i)x'_i+\\
&~~~\sum_{i=2}^{q}p_ix(u)-\sum_{i=1}^{q}p_ix_i\\
&\geq(q\!-\!2)tx(v_1)\!+\!(q\!-\!1)(\kappa_l\!-\!t\!-\!1)x(v_{t+1})\!-\!\sum_{i=1}^{q}p_ix_i~(\mbox{since $n_i\geq 1$ and $0\leq p_i\leq n_i$ for $2\leq i\leq q$})\\
&\geq(q-1)(\kappa_l-t-1)x(v_{t+1})-(\delta-t)x_1\\
&~~~(\mbox{since $q\geq l\geq 2$, $\sum_{j=1}^{q}p_j=\delta-t$ and $x_1\geq x_i$ for $2\leq i\leq q$})\\
&\geq(\delta-t)(x(v_{t+1})-x_1)~~(\mbox{since $q\geq l\geq 2$ and $\kappa_l\geq \delta+1$}).
\end{aligned}
\end{equation*}
Combining this with (\ref{equ::1}) and (\ref{equ::4}), we have
\begin{equation*}
\begin{aligned}
&(\rho(G)+1)(x(v_{t+1})-x_1)=\sum_{i=2}^{q}p_ix_i+\sum_{i=2}^{q}(n_i-p_i)x'_i-x(u)\geq\frac{(\delta\!-\!t)(x(v_{t+1})-x_1)}{\rho(G)},
\end{aligned}
\end{equation*}
from which we get
$$\frac{(\rho^2(G)+\rho(G)-\delta+t)(x(v_{t+1})-x_1)}{\rho(G)}\geq 0.$$
Notice that $G$ contains $K_{\kappa_l-1+n_i}$ as proper subgraphs, where $1\leq i\leq q$. Thus, $\rho(G)>\rho(K_{\kappa_l-1+n_i})=\kappa_l+n_i-2\geq \delta$ due to $n_i\geq 1$ for  $1\leq i\leq q$ and $\kappa_l\geq\delta+1$. Combining this with $t\geq 0$, we can deduce that $\rho^2(G)+\rho(G)-\delta+t>0$. This suggests that $x(v_{t+1})\geq x_1$.
Let $G_1=G-\{uv|~v\in P_i,~ 1\leq i\leq q\}+\{uv_j|~t+1\leq j\leq \delta\}$.
According to Lemma \ref{lem::2.2}, we have
\begin{equation}\label{equ::5}
\begin{aligned}
\rho(G_1)>\rho(G).
\end{aligned}
\end{equation}
Furthermore, we take a vertex $z\in V(K_{n_2})$. Let $G_2=G_1+\{wv|~w\in V(K_{n_1}), v\in V(K_{n_2})\backslash\{z\}\}+\{zv|~v\in V(G)\backslash(S\cup V(K_{n_2}))\}$, and let $S^*=S-u+z$. Then $|S^*|=\kappa_l$ and $G_2-S^*$ contains $q$ components $u,K_{n_1+n_2-1},K_{n_3},\cdots, K_{n_{q}}$. One can verify that $G_2$ is a connected graph with minimum degree $\delta$ and $l$-connectivity $\kappa_l$. Combining this with (\ref{equ::5}), we get
$$\rho(G_2)>\rho (G),$$
which contradicts the maximality of $\rho(G)$.
This implies that $u\notin S$, proving Claim 1.

\medskip
By Claim 1, $u\notin S$, and so $u\in V(C_i)$ for some $1\leq i\leq q$. Without loss of generality, we may assume that $u\in V(C_1)$.

{\bf Claim 2.} $N_{G}(u)\subseteq S$.

{\bf Proof of Claim 2.}
Otherwise, $N_{C_1}(u)\neq \varnothing$. Recall that $d_{S}(u)=t$. Then $d_{C_1}(u)=\delta-t\geq 1$. Let $N_{S}(u)=\{v_1,v_2,\cdots,v_{t}\}$,  $S\backslash N_{S}(u)=\{v_{t+1},v_{t+2},\cdots,v_{\kappa_l}\}$ and $N_{C_1}(u)=\{w_1,w_2,\cdots,w_{\delta-t}\}$. Again by the maximality of $\rho(G)$, we can deduce that $G-u\cong K_{\kappa_l}\vee (K_{n_1-1}\cup K_{n_2}\cup\cdots\cup K_{n_q})$. Let $x$ be the Perron vector of $A(G)$. By symmetry, we say $x_{i}=x(v)$ for $v\in V(C_i)$ where $2\leq i\leq q$, $x_1=x(v)$ for $v\in N_{C_1}(u)$, and $x'_1=x(v)$ for $v\in V(C_1)\backslash N_{C_1}[u]$.
Note that $x(v_{1})=x(v_{i})$ for $2\leq i\leq t$ and $x(v_{t+1})=x(v_{i})$ for $t+2\leq i\leq \kappa_l$. Then, from $A(G)x=\rho(G)x$, we have
\begin{eqnarray*}
\rho(G)x(v_{t+1})&\!\!=\!\!&tx(v_{1})+(\kappa_l-t-1)x(v_{t+1})+(\delta-t)x_1+(n_1-1-(\delta-t))x'_1+\sum_{i=2}^{q}n_ix_i,\\
\rho(G)x_1&\!\!=\!\!&tx(v_1)+(\kappa_l-t)x(v_{t+1})+(\delta-t-1)x_1+(n_1-1-(\delta-t))x'_1+x(u).
\end{eqnarray*}
Thus,
\begin{equation*}
\begin{aligned}
&~~~(\rho(G)+1)(x(v_{t+1})-x_1)\\
&=\sum_{i=2}^{q}n_ix_i-x(u)\\
 &=\frac{1}{\rho(G)}\Big(\sum_{i=2}^{q}n_i(\rho(G)x_i)-\rho(G)x(u)\Big)\\
 &=\frac{\sum_{i=2}^{q}n_{i}((n_i\!-\!1)x_i\!+\!tx(v_1)\!+\!(\kappa_l\!-\!t)x(v_{t+1}))\!-\!(tx(v_1)\!+\!(\delta\!-\!t)x_1)}{\rho(G)}\\
&\geq\frac{(\sum_{i=2}^{q}n_i\!-\!1)tx(v_1)\!+\!\sum_{i=2}^{q}n_i(\kappa_l\!-\!t)x(v_{t+1})\!-\!(\delta\!-\!t)x_1}{\rho(G)}~(\mbox{since $n_i\geq 1$ for $2\leq i\leq q$})\\
&\geq\frac{(q\!-\!2)tx(v_1)\!+\!(q\!-\!1)(\kappa_l\!-\!t)x(v_{t+1})\!-\!(\delta\!-\!t)x_1}{\rho(G)}~(\mbox{since $n_i\geq 1$ for $2\leq i\leq q$})\\
&>\frac{(\delta-t)(x(v_{t+1})-x_1)}{\rho(G)}~(\mbox{since $q\geq l\geq2$, $t\geq 0$, $\kappa_l>\delta$ and $\delta>t$}).\\
\end{aligned}
\end{equation*}
It follows that $\frac{(\rho^2(G)+\rho(G)-\delta+t)(x(v_{t+1})-x_1)}{\rho(G)}>0.$ Notice that $G$ contains $K_{\kappa_l}$ as a proper subgraph. Then $\rho(G)>\rho(K_{\kappa_l})=\kappa_l-1\geq\delta$, and hence $\rho^2(G)+\rho(G)-\delta+t>0$. This suggests that $x(v_{t+1})>x_1$. Take $G^*=G-\{uw_i|~ 1\leq i\leq \delta-t\}+\{uv_j|
~t+1\leq j\leq \delta\}$. It is easy to find that $G^*$ is also a connected graph with minimum degree $\delta$ and $l$-connectivity $\kappa_l$. By Lemma \ref{lem::2.2}, we have $\rho(G^*)>\rho(G)$, which contradicts the maximality of $\rho(G)$. Thus, we have $N_{G}(u)\subseteq S$, completing the proof of Claim 2. 

\medskip
In what follows, we shall prove that $G\cong G^{\kappa_l,\delta}_{n}$ for $\kappa_l>\delta$. In fact, by the maximality of $\rho(G)$, Claims 1 and 2, we can deduce that $G-u\cong K_{\kappa_l}\vee (K_{t_1}\cup K_{t_2}\cup\cdots\cup K_{t_{l-1}})$ for some integers $t_1\geq t_2\geq\cdots\geq t_{l-1}$, where $\sum_{i=1}^{l-1}t_i=n-\kappa_l-1$ and $N_{G}(u)\subseteq V(K_{\kappa_l})$. If $t_i=1$ for all $2\leq i\leq l-1$, then the result follows. If there exists  $t_j\geq 2$  for some $2\leq j\leq l-1$. Let $x(v)=x_1$ for $v\in V(K_{t_1})$ and $x(v)=x_2$ for $v\in V(K_{t_j})$. By $A(G)x=\rho(G)x$, we can deduce that
$$(\rho(G)-t_j+1)(x_1-x_2)=(t_1-t_j)x_1\geq 0$$
due to $t_1\geq t_j$. Since $G$ contains $K_{\kappa_l+t_j}$ as a proper subgraph and $\kappa_l\geq 1$, $\rho(G)>\rho(K_{\kappa_l+t_j})=\kappa_l+t_j-1\geq t_j$, and hence $x_1\geq x_2$. Take $w\in V(K_{t_j})$ and $z\in V(K_{t_1})$. Let $G'=G-\{wv|~v\in V(K_{t_j})\backslash \{w\}\}+\{zv|~v\in V(K_{t_j})\backslash \{w\}\}$. Then $G'$ is a connected graph with minimum degree $\delta$ and $l$-connectivity $\kappa_l$. According to Lemma \ref{lem::2.2}, we have $\rho(G')>\rho(G)$, which also leads to a contradiction. This implies that $G\cong G^{\kappa_l,\delta}_{n}$, as desired.

\medskip
{\flushleft\bf{Case 2.}} $\kappa_l\leq \delta$.

Recall that $|V(C_i)|=n_i$ for $1\leq i\leq q$. We assert that $q=l$. Otherwise, $q>l$. One can verify that $G$ is a proper spanning subgraph of $G''=K_{\kappa_l}\vee (K_{n_1}\cup \cdots \cup K_{n_{l-1}}\cup K_{n'_l})$ where $n'_l=\sum_{i=l}^{q}n_i$. It follows that $\rho(G)<\rho(G'')$, which contradicts the maximality of $\rho(G)$. This implies that $q=l$. Hence,
$$\rho(G)\leq \rho(K_{\kappa_l}\vee (K_{n_1}\cup K_{n_2}\cup \cdots \cup K_{n_l})),$$
with equality if and only if $G\cong K_{\kappa_l}\vee (K_{n_1}\cup K_{n_2}\cup \cdots \cup K_{n_l})$.
Without loss of generality, we may assume that $n_1\geq n_2\geq \cdots\geq n_{l}$. Clearly, $n_{l}\geq \delta+1-\kappa_l$ because the minimum degree of $G$ is $\delta$. Combining this with  Lemma \ref{lem::2.3}, we have
$$\rho(K_{\kappa_l}\vee (K_{n_1}\cup K_{n_2}\cup \cdots \cup K_{n_l}))\leq \rho(K_{\kappa_l} \vee (K_{n-\kappa_l-(\delta+1-\kappa_l)(l-1)}\cup (l-1)K_{\delta+1-\kappa_l})),$$
with equality if and only if $(n_1,n_2,\cdots,n_l)=(n-\kappa_l-(\delta+1-\kappa_l)(l-1),\delta+1-\kappa_l,\cdots,\delta+1-\kappa_l)$.
By the maximality of $\rho(G)$, we conclude that $G\cong K_{\kappa_l} \vee (K_{n-\kappa_l-(\delta+1-\kappa_l)(l-1)}\cup (l-1)K_{\delta+1-\kappa_l})$.

This completes the proof.
\end{proof}

\section{Proof of Theorem \ref{thm::1.2}}

Let $M$ be a real $n\times n$ matrix, and let $X=\{1,2,\ldots,n\}$. Given a partition $\Pi=\{X_1,X_2,\ldots,X_k\}$ with $X=X_{1}\cup X_{2}\cup \cdots \cup X_{k}$, the matrix $M$ can be partitioned as
$$
M=\left(\begin{array}{ccccccc}
M_{1,1}&M_{1,2}&\cdots &M_{1,k}\\
M_{2,1}&M_{2,2}&\cdots &M_{2,k}\\
\vdots& \vdots& \ddots& \vdots\\
M_{k,1}&M_{k,2}&\cdots &M_{k,k}\\
\end{array}\right).
$$
The \textit{quotient matrix} of $M$ with respect to $\Pi$ is defined as the $k\times k$ matrix $B_\Pi=(b_{i,j})_{i,j=1}^k$ where $b_{i,j}$ is the  average value of all row sums of $M_{i,j}$.
The partition $\Pi$ is called \textit{equitable} if each block $M_{i,j}$ of $M$ has constant row sum $b_{i,j}$.
Also, we say that the quotient matrix $B_\Pi$ is \textit{equitable} if $\Pi$ is an equitable partition of $M$.

\begin{lem}[Godsil~\cite{Godsil}, p.78 ]\label{lem::3.1}
Let $M$ be a real symmetric matrix, and let $\lambda_{1}(M)$ be the largest eigenvalue of $M$. If $B_\Pi$ is an equitable quotient matrix of $M$, then the eigenvalues of  $B_\Pi$ are also eigenvalues of $M$. Furthermore, if $M$ is nonnegative and irreducible, then $\lambda_{1}(M) = \lambda_{1}(B_\Pi).$
\end{lem}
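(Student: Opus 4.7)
The plan is to work through the characteristic matrix of the partition. Let $P$ be the $n\times k$ matrix with $P_{i,j}=1$ if $i\in X_j$ and $P_{i,j}=0$ otherwise. The content of the hypothesis that $\Pi$ is equitable is exactly the matrix identity $MP = PB_\Pi$: the $(i,j)$-entry of $MP$ is the sum of entries of row $i$ of $M$ restricted to the columns indexed by $X_j$, which is a row sum of the block $M_{s,j}$ where $X_s$ contains $i$, and by equitability this equals $(B_\Pi)_{s,j}$, which is precisely $(PB_\Pi)_{i,j}$. Every argument below will pivot on this identity.

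For the first claim, I would take any eigenpair $(\mu,v)$ of $B_\Pi$ and compute
$$M(Pv) = (MP)v = (PB_\Pi)v = P(B_\Pi v) = \mu(Pv).$$
Since the columns of $P$ are indicator vectors of disjoint nonempty cells, $P$ has full column rank $k$, so $Pv\neq 0$ whenever $v\neq 0$, and $\mu$ is an eigenvalue of $M$ with eigenvector $Pv$.

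For the second claim, assume $M$ is nonnegative and irreducible, and let $x>0$ be a Perron eigenvector with $Mx=\lambda_1(M)x$. I would pull $x$ back along $P$ by setting $u=P^{T}x$. Using the symmetry of $M$ together with $MP=PB_\Pi$, a short manipulation gives
$$B_\Pi^{T}u \;=\; B_\Pi^{T}P^{T}x \;=\; (PB_\Pi)^{T}x \;=\; (MP)^{T}x \;=\; P^{T}Mx \;=\; \lambda_1(M)\,u.$$
Because $x$ is strictly positive and every cell $X_j$ is nonempty, $u$ is strictly positive, so it is a genuine eigenvector and $\lambda_1(M)$ lies in the spectrum of $B_\Pi^{T}$, hence of $B_\Pi$. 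Combined with the first claim, which forces every eigenvalue of $B_\Pi$ to appear in the real spectrum of the symmetric matrix $M$ and therefore to be bounded above by $\lambda_1(M)$, we obtain $\lambda_1(B_\Pi)=\lambda_1(M)$.

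The main subtlety I would be careful about is that $B_\Pi$ need not be symmetric, so the notation $\lambda_1(B_\Pi)$ has to be interpreted via Perron--Frobenius applied to the nonnegative matrix $B_\Pi$; one must verify that the real eigenvalue $\lambda_1(M)$ exhibited above is actually the Perron root of $B_\Pi$ rather than merely some eigenvalue. This is handled by the two-sided bound: every eigenvalue of $B_\Pi$ is real (it lies in the spectrum of $M$) and at most $\lambda_1(M)$, while $\lambda_1(M)$ itself is attained, so the Perron root of $B_\Pi$ equals $\lambda_1(M)$.
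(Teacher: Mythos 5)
The paper never proves this lemma: it is quoted, with attribution, from Godsil's \emph{Algebraic Combinatorics} (p.~78), so there is no internal proof to compare against. Your proof is correct and complete, and it follows the standard textbook route: the characteristic matrix $P$ of the partition, the identity $MP=PB_\Pi$ as the matrix formulation of equitability, and the push-forward $v\mapsto Pv$ (injective since $P$ has full column rank) for the containment of spectra. For the second claim your argument is a valid and slightly less common variant of the usual one: instead of taking a nonnegative Perron eigenvector $v$ of the nonnegative matrix $B_\Pi$, pushing it forward to $Pv\geq 0$, and invoking the fact that an irreducible nonnegative matrix admits a nonnegative eigenvector only for its Perron root, you pull the strictly positive Perron vector $x$ of $M$ back to $u=P^{T}x>0$ and check, using the symmetry of $M$, that $u$ is a positive eigenvector of $B_\Pi^{T}$ for the eigenvalue $\lambda_1(M)$; since a matrix and its transpose have the same spectrum, $\lambda_1(M)\in\mathrm{spec}(B_\Pi)$, and the first claim supplies the reverse bound (every eigenvalue of $B_\Pi$ lies in $\mathrm{spec}(M)$, hence is real and at most $\lambda_1(M)$), giving $\lambda_1(B_\Pi)=\lambda_1(M)$. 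A small advantage of your route is that Perron--Frobenius theory is applied only to $M$, never to the possibly non-symmetric quotient $B_\Pi$, because realness of $\mathrm{spec}(B_\Pi)$ comes for free from the first claim; your closing remark about interpreting $\lambda_1(B_\Pi)$ is exactly the right point to flag, and your two-sided bound settles it.
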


\begin{lem}[Bondy and Murty~\cite{Bondy}, p.173]\label{lem::3.2}
Let $D$ be an arbitrary strongly connected digraph with vertex connectivity $k$. Suppose that $S$ is a $k$-vertex cut of $D$ and $D_1,\ldots, D_s$ are the strongly connected components of $D-S$. Then there exists an ordering of $D_1,\ldots, D_s$ such that, for $1\leq i\leq s$ and $v\in V(D_i)$, every tail of $v$ in $D_1$, $D_2$,\ldots, $D_{i-1}$.
\end{lem}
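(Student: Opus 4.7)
The plan is to prove the lemma by constructing the \emph{condensation} of $D-S$ and invoking the standard topological ordering of any finite DAG. Form an auxiliary digraph $H$ on vertex set $\{D_1,\ldots,D_s\}$ in which we place an arc $D_j D_i$ whenever there exists at least one arc $uv\in E(D-S)$ with $u\in V(D_j)$ and $v\in V(D_i)$. The desired ordering of the strongly connected components will simply be a topological ordering of $H$.

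First I would verify that $H$ is acyclic. Suppose for contradiction that $H$ contains a directed cycle $D_{i_1}D_{i_2}\cdots D_{i_t}D_{i_1}$ with $t\ge 2$. Each arc of this cycle is witnessed by an actual arc of $D-S$ between the corresponding components, and inside each $D_{i_j}$ strong connectivity provides directed paths between any two of its vertices. Concatenating these, one obtains, for any two vertices $x\in V(D_{i_a})$ and $y\in V(D_{i_b})$, directed paths from $x$ to $y$ and from $y$ to $x$ lying entirely in $D-S$. This forces all the $D_{i_j}$ to lie in a single strongly connected component of $D-S$, contradicting the fact that they are distinct maximal strongly connected subgraphs.

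Next I would apply a topological sort to the DAG $H$, obtaining an ordering $D_{\pi(1)},\ldots,D_{\pi(s)}$ in which every arc of $H$ runs from a lower-indexed component to a higher-indexed one (equivalently, after reversing, every arc runs from higher to lower index, depending on the convention one wants for the conclusion). Relabelling so that $D_i := D_{\pi(i)}$ then yields the desired ordering: for every $v\in V(D_i)$, every tail of an arc into $v$ that lies outside $S$ must sit in some $D_j$ with $j\le i-1$ together with in-neighbours inside $D_i$ itself, which is exactly the conclusion of the lemma.

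The only non-routine step is verifying acyclicity of the condensation, and even that is a one-line consequence of the maximality built into the definition of a strongly connected component; the topological-sort step is completely standard. The hypotheses about the vertex connectivity $k$ and the cut $S$ are used only to guarantee the setting (so that $s\ge 2$ is meaningful in the intended applications such as Theorem~\ref{thm::1.2}), and play no role in the proof itself. Since this is a classical fact recorded in Bondy and Murty, I would in the paper simply cite it; the above sketch is what the citation unpacks to.
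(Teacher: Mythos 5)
Your proposal is correct, and it coincides with what the paper does: the paper states this lemma without proof, citing Bondy and Murty, and your condensation-plus-topological-ordering argument is precisely the standard proof that this citation unpacks to (acyclicity of the condensation via maximality of strong components, then a topological sort fixing the orientation convention so that all arcs between distinct components run toward higher indices). The only cosmetic points are that the auxiliary digraph $H$ should be defined with arcs only between \emph{distinct} components (so self-loops do not spoil acyclicity) and that the ordering convention must be chosen so the conclusion reads as stated---both of which you already flag.
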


Also, by the well-known Perron–Frobenius theorem (cf.\cite[Section 8.8]{Godsil-Royle}), we can easily deduce the following result.
\begin{lem}\label{lem::3.3}
Let $D$ be a strongly connected digraph and $D'$ be a proper subgraph of $D$. Then $\rho(D')<\rho(D)$.
\end{lem}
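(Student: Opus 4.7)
The plan is to deduce the lemma directly from the strict monotonicity of the Perron–Frobenius eigenvalue for nonnegative irreducible matrices. Since $D$ is strongly connected on, say, $n$ vertices, its adjacency matrix $A(D)$ is an $n\times n$ nonnegative irreducible matrix, so by Perron–Frobenius $\rho(D)=\rho(A(D))$ is a simple eigenvalue with a strictly positive eigenvector $x>0$.

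First I would package the ``proper subgraph'' hypothesis as a matrix inequality. If $V(D')=V(D)$ but $E(D')\subsetneq E(D)$, then $A(D')$ is already $n\times n$ with $A(D')\le A(D)$ entrywise and $A(D')\ne A(D)$. If instead $V(D')\subsetneq V(D)$, extend $A(D')$ to an $n\times n$ matrix $\widetilde A$ by inserting zero rows and columns for each vertex in $V(D)\setminus V(D')$; then $\widetilde A$ is nonnegative, $\widetilde A\le A(D)$ entrywise, and $\widetilde A\ne A(D)$, because every row of $A(D)$ indexed by a removed vertex is nonzero (as $D$ is strongly connected, each vertex has out-degree at least $1$), while the corresponding row of $\widetilde A$ is zero. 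Moreover adjoining zero rows and columns adds only the eigenvalue $0$, so $\rho(\widetilde A)=\rho(A(D'))=\rho(D')$. In either case I obtain a nonnegative matrix $B$ with $B\le A(D)$ entrywise, $B\ne A(D)$, and $\rho(B)=\rho(D')$.

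The remaining step is the classical strict monotonicity assertion: if $A\ge 0$ is irreducible, $0\le B\le A$, and $B\ne A$, then $\rho(B)<\rho(A)$. This is the substantive content of Perron–Frobenius and is exactly what the reference to \cite[Section~8.8]{Godsil-Royle} supplies; a standard derivation applies $A-B$ to the Perron eigenvector $x>0$ of $A$ to get $Bx\le \rho(A)x$ with strict inequality in some coordinate, then uses irreducibility of $A$ to conclude via the Collatz–Wielandt characterization $\rho(B)=\inf_{z>0}\max_i (Bz)_i/z_i$ that $\rho(B)<\rho(A)$. The only mildly delicate point is precisely the strictness in this inequality (the weak bound $\rho(B)\le \rho(A)$ follows from $B\le A$ alone), but since the paper invokes Perron–Frobenius by name, I would simply cite this strict version and let the two bookkeeping observations above finish the proof.
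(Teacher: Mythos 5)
Your proposal is correct and takes essentially the same route as the paper, which offers no written proof at all: it merely states the lemma as an easy consequence of the Perron--Frobenius theorem (citing Godsil and Royle, Section 8.8), i.e.\ exactly the strict-monotonicity fact you invoke, with your zero-padding bookkeeping for deleted vertices being the only detail the paper leaves implicit. One small caution: your parenthetical Collatz--Wielandt sketch is incomplete as stated, since strict inequality in a single coordinate of $Bx\le\rho(A)x$ does not by itself force $\rho(B)<\rho(A)$ (the maximum could be attained at a coordinate where equality holds), but as you ultimately cite the standard strict version of the monotonicity theorem, this does not affect correctness.
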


Recall that $\overrightarrow{G}^{k}_{n_1,n_2,\cdots,n_{l}}=\overrightarrow{K}_{k}\nabla (\overrightarrow{K}_{n_1}\cup \overrightarrow{K}_{n_2}\cup \cdots \cup \overrightarrow{K}_{n_l})\cup F$, where $F=\{vu|~v\in V(\overrightarrow{K}_{n_i}), u\in V(\overrightarrow{K}_{n_j})\}$ for all $1\leq i<j\leq l$ and $l\geq 2$.
For $n_p,n_q\geq 2$,  $\overrightarrow{G}^{k}_{n_1,\cdots, n_{q}+1,\cdots, n_{p}-1,\cdots,n_{l}}$ is the graph obtained from $\overrightarrow{G}^{k}_{n_1,n_2,\cdots,n_{l}}$ by replacing $n_q$ and $n_p$ with $n_{q}+1$ and $n_{p}-1$, respectively.

\begin{lem}\label{lem::3.4}
Let $n_i$, $k$ and $l\geq 2$ be positive integers where $1\leq i\leq l$. If $n_q\geq n_p\geq 2$ for some $1\leq p,q\leq l$ and $p\neq q$, then  $\rho(\overrightarrow{G}^{k}_{n_1,n_2,\cdots,n_{l}})<\rho(\overrightarrow{G}^{k}_{n_1,\cdots, n_{q}+1,\cdots,n_{p}-1,\cdots,n_{l}})$.
\end{lem}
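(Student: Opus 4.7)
The plan is to derive an explicit polynomial equation satisfied by $\rho := \rho(\overrightarrow{G}^{k}_{n_1,\ldots,n_l})$ via the block symmetry of the digraph, then compare the analogous equation for the modified digraph to force the strict inequality.

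Since $k \ge 1$, the digraph $\overrightarrow{G}^{k}_{n_1,\ldots,n_l}$ is strongly connected (the bidirectional arcs with $\overrightarrow{K}_k$ link all blocks), so its adjacency matrix is nonnegative and irreducible. The automorphism group acts transitively within each block, so the positive Perron eigenvector takes constant values $x_0, x_1, \ldots, x_l$ on the blocks $V(\overrightarrow{K}_k), V(\overrightarrow{K}_{n_1}), \ldots, V(\overrightarrow{K}_{n_l})$. Writing $Ax = \rho x$ at one vertex of each block produces
\begin{align*}
\rho x_0 &= (k-1) x_0 + \sum_{j=1}^l n_j x_j,\\
\rho x_i &= k x_0 + (n_i-1) x_i + \sum_{j>i} n_j x_j \quad (1 \le i \le l).
\end{align*}
Subtracting consecutive equations in the second family produces the recurrence $(\rho+1)x_{i+1} = (\rho - n_i + 1) x_i$ for $i \ge 1$; subtracting the $i = 1$ equation from the $x_0$-equation yields $(\rho+1)(x_0 - x_1) = 0$, hence $x_0 = x_1$. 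Feeding these back into the $x_0$-equation and telescoping via $n_j = (\rho + 1) - (\rho - n_j + 1)$ yields the key characteristic equation
$$\prod_{i=1}^l (\rho - n_i + 1) = k(\rho+1)^{l-1}. \qquad (\star)$$
Set $h(\rho) := \prod_i (\rho - n_i + 1) - k(\rho+1)^{l-1}$; then $\rho$ is the largest real root of $h$, since every root of $h$ corresponds, via the recurrence, to an eigenvalue of the adjacency matrix.

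For the modified digraph $G' := \overrightarrow{G}^{k}_{n_1,\ldots,n_q+1,\ldots,n_p-1,\ldots,n_l}$, the same derivation gives a polynomial $h'$ in which the factors $(\rho - n_p + 1)(\rho - n_q + 1)$ are replaced by $(\rho - n_p + 2)(\rho - n_q)$. A short direct expansion gives
$$h(\rho) - h'(\rho) = (n_q - n_p + 1)\prod_{i \ne p,q}(\rho - n_i + 1).$$
Because $\overrightarrow{G}^{k}_{n_1,\ldots,n_l}$ contains $\overrightarrow{K}_{k+n_j}$ as a proper subdigraph (using $l \ge 2$), Lemma~\ref{lem::3.3} yields $\rho \ge k + n_j - 1 > n_j - 1$ for every $j$, so each factor $\rho - n_i + 1 > 0$. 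Combined with $n_q - n_p + 1 \ge 1$, this forces $h'(\rho) < 0$. Since $h'$ is monic of degree $l \ge 2$, $h'(\tau) \to +\infty$ as $\tau \to \infty$, so the intermediate value theorem delivers a real root of $h'$ strictly exceeding $\rho$. Applying the same characteristic-equation argument to $G'$ identifies $\rho(G')$ with the largest real root of $h'$, yielding $\rho(G') > \rho$ as required.

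The principal technical obstacle is the algebraic reduction to $(\star)$: identifying the right combination of equations to eliminate the cross terms (the identities $x_0 = x_1$ and the recurrence) and then executing the telescoping cleanly. Once $(\star)$ is established, the comparison via $h - h'$ plus a one-line intermediate value argument finishes the proof with essentially no further work.
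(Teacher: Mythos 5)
Your proposal is correct and is essentially the paper's argument: your characteristic equation $(\star)$ is exactly the factor $\prod_{i=1}^{l}(\lambda-(n_i-1))-k(\lambda+1)^{l-1}$ of the equitable quotient matrix's characteristic polynomial computed in the paper, and your difference formula $h(\rho)-h'(\rho)=(n_q-n_p+1)\prod_{i\neq p,q}(\rho-n_i+1)$ together with the bound $\rho>k+n_j-1$ from the complete subdigraph is the same comparison the paper makes. The only cosmetic difference is that you derive the polynomial from block-constancy of the Perron vector and a telescoping recurrence rather than from the quotient-matrix determinant, and you run the final intermediate-value step at $\rho(D)$ instead of $\rho(D')$; both are equivalent.
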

\renewcommand\proofname{\bf Proof}
\begin{proof}
Let $D=\overrightarrow{G}^{k}_{n_1,n_2,\cdots,n_{l}}$. Observe that $A(D)$ has the equitable quotient matrix
$$
A_{\Pi_1}=\begin{bmatrix}
k-1 &n_1  & n_2 &\cdots& n_l\\
k   &n_1-1&n_2& \cdots& n_l\\
k   &0    &n_2-1& \cdots& n_l\\
\vdots&\vdots   &\vdots& & \vdots\\
k   &0    &0    & \cdots& n_l-1\\
\end{bmatrix}.
$$
Then
\begin{equation*}
\varphi(A_{\Pi_1},\lambda)=\left|\begin{array}{cccccccc}
\lambda\!-\!(k\!-\!1)&-n_1&-n_2&\cdots&-n_{l-1}&-n_l\\
-k &\!\lambda\!-\!(n_1\!-\!1)\!&-n_2&\cdots&-n_{l-1}&-n_l\\
-k &0&\!\lambda\!-\!(n_2\!-\!1)\!&  \cdots& -n_{l-1}& -n_l\\
\vdots & \vdots& \vdots & &  \vdots& \vdots\\
-k & 0& 0& \cdots& \lambda\!-\!(n_{l-1}\!-\!1)& -n_l\\
-k & 0&  0&\cdots& 0& \lambda\!-\!(n_l\!-\!1)\\
\end{array}\right|
\end{equation*}
\begin{equation*}
~~~~~~~= \left|\begin{array}{cccccccc}
\lambda+1&-(\lambda+1)&0&\cdots&0&0\\
0 &\!\lambda\!-\!(n_1\!-\!1)\!&-(\lambda+1)&\cdots&0&0\\
0 &0&\!\lambda\!-\!(n_2\!-\!1)\!&  \cdots& 0& 0\\
\vdots & \vdots& \vdots & &  \vdots&\vdots\\
0 & 0& 0& \cdots& \lambda\!-\!(n_{l-1}\!-\!1)&-(\lambda+1)\\
-k & 0&  0&\cdots& 0& \lambda\!-\!(n_l\!-\!1)\\
\end{array}\right|
\end{equation*}
\begin{flalign*}
&\qquad\quad\quad\quad\:=(\lambda+1)\Big(\prod_{i=1}^{l}(\lambda-(n_i-1))-k(\lambda+1)^{l-1}\Big).&
\end{flalign*}
Assume that $D'=\overrightarrow{G}^{k}_{n_1,\cdots, n_{q}+1,\cdots, n_{p}-1,\cdots,n_{l}}$. Note that $A(D')$ has the equitable quotient matrix $B_{\Pi_2}$, which is obtained by replacing $n_q$ and $n_p$ with $n_{q}+1$ and $n_{p}-1$ in $A_{\Pi_1}$. Without loss of generality, we may assume that $n_t=\max\{n_1,\cdots,n_q+1,\cdots,n_p-1,\cdots,n_{l}\}$. Since $n_q\geq n_p$, it follows that $n_t\geq n_i$ where $1\leq i\leq l$. For
$\lambda\geq n_t$, we have
\begin{equation*}
\begin{aligned}
&~~\varphi(A_{\Pi_1},\lambda)-\varphi(B_{\Pi_2},\lambda)\\
&=\frac{(\lambda+1)\prod_{i=1}^{l}(\lambda-(n_i-1))}{(\lambda\!-\!(n_p\!-\!1))(\lambda\!-\!(n_q\!-\!1))}((\lambda\!-\!(n_p\!-\!1))(\lambda\!-\!(n_q\!-\!1))\!-\!
(\lambda\!-\!(n_p\!-\!2))(\lambda\!-\!n_q))\\
&=\frac{(\lambda+1)\prod_{i=1}^{l}(\lambda-(n_i-1))}{(\lambda\!-\!(n_p\!-\!1))(\lambda\!-\!(n_q\!-\!1))}(n_q-n_p+1)
\\
&>0 ~(\mbox{since $n_q\geq n_p\geq 2$}),
\end{aligned}
\end{equation*}
and hence $\varphi(A_{\Pi_1},\lambda)>\varphi(B_{\Pi_2},\lambda)$ for $\lambda\geq n_t$.  Notice that $D'$ is a strongly connected digraph and $\overrightarrow{K}_{k+n_t}$ is a proper subgraph of $D'$. Thus,
$\rho(D')>\rho(\overrightarrow{K}_{k+n_t})=n_t+k-1\geq n_t$ by Lemma \ref{lem::3.3} and $k\geq 1$. Combining this with Lemma \ref{lem::3.1}, we have
$$\rho(\overrightarrow{G}^{k}_{n_1,n_2,\cdots,n_{l}})<\rho(\overrightarrow{G}^{k}_{n_1,\cdots, n_{q}+1,\cdots, n_{p}-1,\cdots,n_{l}}),$$
as required. \end{proof}

Now, we will give a short proof of Theorem \ref{thm::1.2}.
\renewcommand\proofname{\bf Proof of Theorem \ref{thm::1.2}}
\begin{proof}
Suppose that $D$ is a digraph with maximum spectral radius in $\mathcal{D}_{n,\kappa_l}$. Thus, there exists some subset $S\subseteq V(D)$ with $|S|=\kappa_l$ such that $D-S$ contains at least $l$ strong connected components. By the maximality of $\rho(D)$, Lemmas \ref{lem::3.2} and \ref{lem::3.3}, we have $D\cong \overrightarrow{G}^{\kappa_l}_{n_1,n_2,\cdots,n_{l}}$ for some integers $n_i\geq 1$ ($1\leq i\leq l$).
Furthermore, according to Lemma \ref{lem::3.4}, we can deduce that $D\cong \overrightarrow{G}^{\kappa_l}_{n}$, as required.
\end{proof}

\section{Proof of Theorem \ref{thm::1.3}}

\begin{lem}[See \cite{Y.Hong}]\label{lem::4.1}
Let $G$ be a graph with $n$ vertices and $m$ edges. Then
                  $$\rho(G)\leq\sqrt{2m-n+1}.$$
\end{lem}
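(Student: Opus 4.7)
The plan is to establish Hong's bound via a Perron--vector walk-counting argument. Let $x$ be a nonnegative Perron eigenvector of $A(G)$ associated with $\rho=\rho(G)$, normalized so that $\max_{w\in V(G)} x_w = 1$, and let $v$ be a vertex attaining this maximum, i.e.\ $x_v=1$. Reading the eigenvalue equation $Ax=\rho x$ at coordinate $v$ gives $\rho = \rho\, x_v = \sum_{u \in N_G(v)} x_u$. I would first record this to fix the role of $v$ and to keep track of the normalization.

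The second step is to iterate. Applying $A$ twice yields
$$
\rho^2 \;=\; \rho^2 x_v \;=\; (A^2 x)_v \;=\; \sum_{w\in V(G)} (A^2)_{v,w}\, x_w \;\le\; \sum_{w\in V(G)} (A^2)_{v,w},
$$
where the inequality uses $x_w \le 1$ for every $w$. The right-hand sum counts walks of length $2$ beginning at $v$, and this number equals $\sum_{u\in N_G(v)} d(u)$. So the target bound reduces to showing $\sum_{u\in N_G(v)} d(u) \le 2m - n + 1$.

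For the third step I would invoke the handshake identity. Since $G$ is (implicitly) assumed connected in the setting where the lemma is applied, every vertex of $V(G)\setminus N_G[v]$ has degree at least $1$. Splitting the degree sum according to whether a vertex is $v$, lies in $N_G(v)$, or lies outside $N_G[v]$ gives
$$
2m \;=\; d(v) + \sum_{u\in N_G(v)} d(u) + \sum_{w\in V(G)\setminus N_G[v]} d(w) \;\ge\; d(v) + \sum_{u\in N_G(v)} d(u) + \bigl(n - 1 - d(v)\bigr),
$$
and rearranging yields $\sum_{u\in N_G(v)} d(u) \le 2m - n + 1$. Combining this with the previous paragraph gives $\rho^2 \le 2m - n + 1$, hence $\rho(G) \le \sqrt{2m - n + 1}$, as desired.

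The only delicate point is the lower bound $\sum_{w\notin N_G[v]} d(w) \ge n - 1 - d(v)$, which requires every vertex outside $N_G[v]$ to have positive degree; this is automatic for connected graphs (the regime in which Lemma \ref{lem::4.1} will be used in subsequent arguments) and is the standard hypothesis of Hong's original theorem. Apart from this, the argument is a clean two-line combination of the Perron eigenvector relation and the degree-sum identity, so no substantial obstacle is expected.
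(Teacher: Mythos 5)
Your proof is correct. Note first that the paper itself gives no argument for this lemma: it is imported directly from Hong's paper \cite{Y.Hong}, so there is no in-paper proof to compare against. Your argument --- evaluate $A^2x=\rho^2x$ at a maximum coordinate of the Perron vector, bound $(A^2x)_v$ by the number of length-$2$ walks from $v$, i.e.\ $\sum_{u\in N_G(v)}d(u)$, and then use the handshake identity together with the fact that every vertex outside $N_G[v]$ has degree at least $1$ --- is essentially Hong's original proof of this classical bound, so you have reconstructed the standard argument rather than found a new route. You were also right to flag the one delicate point: the inequality genuinely requires minimum degree at least $1$ (e.g.\ $K_3\cup K_1$ has $\rho=2>\sqrt{2m-n+1}=\sqrt{3}$), a hypothesis that the paper's statement of the lemma omits but which holds in every application there, since the lemma is only ever invoked for connected graphs. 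One could state this slightly more cleanly without choosing an eigenvector at all: $\rho(G)^2=\rho(A^2)$ is at most the maximum row sum of $A^2$, and your degree-splitting argument shows every row sum of $A^2$ is at most $2m-n+1$ when $\delta(G)\geq 1$; this avoids any discussion of where the Perron vector is supported when $G$ is disconnected, though your version is equally valid.
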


Let $\mathcal{K}_{n,n_1}^{k}$ be the set of connected graphs obtained from $K_{n-n_1}\cup n_1K_1$ by adding $k$ edges between $K_{n-n_1}$ and $n_1K_1$, where $k\geq n_1$. Let $e(G)$ denote the number of edges in $G$.
\begin{lem}\label{lem::4.2}
Let $G\in \mathcal{K}_{n,p-1}^{k}$ where $n\geq 2k+2$ and $2\leq p\leq k+1$. Then
$$n-p< \rho(G)< n-p+1.$$
\end{lem}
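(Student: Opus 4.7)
The plan is to establish the two inequalities separately, using very different classical tools.

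For the upper bound $\rho(G) < n-p+1$, I would invoke Hong's inequality (Lemma \ref{lem::4.1}) after computing the edge count explicitly. Any $G \in \mathcal{K}_{n,p-1}^{k}$ has $m = \binom{n-p+1}{2} + k$ edges (the $\binom{n-p+1}{2}$ edges of $K_{n-p+1}$ together with the $k$ crossing edges to the $p-1$ formerly isolated vertices). Lemma \ref{lem::4.1} therefore yields $\rho(G)^2 \leq 2m - n + 1 = (n-p+1)(n-p) + 2k - n + 1$. Comparing this with $(n-p+1)^2 = (n-p+1)(n-p) + (n-p+1)$, the desired strict inequality reduces to $2k - n + 1 < n - p + 1$, that is, $2n > 2k + p$. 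This follows immediately from the hypotheses $n \geq 2k+2$ and $p \leq k+1$, since then $2n \geq 4k+4 > 3k+1 \geq 2k + p$.

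For the lower bound $\rho(G) > n-p$, I would appeal to the strict-monotonicity form of the Perron--Frobenius theorem. Let $H = K_{n-p+1} \cup (p-1)K_1$, viewed as a spanning subgraph of $G$; then $A(H) \leq A(G)$ entrywise and $\rho(H) = \rho(K_{n-p+1}) = n-p$. Since $G$ is connected, $A(G)$ is irreducible. Moreover $A(H) \neq A(G)$, because the assumption $k \geq p-1 \geq 1$ (which is part of the definition of $\mathcal{K}_{n,p-1}^{k}$ together with $p \geq 2$) forces at least one crossing edge. Perron--Frobenius then gives $\rho(G) > \rho(H) = n-p$.

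Neither step presents a genuine obstacle: the upper bound is a one-line application of Hong's inequality followed by an elementary numerical comparison, and the lower bound is a one-line application of Perron--Frobenius to an explicit spanning subgraph. The only subtlety worth flagging is the strictness in the Perron--Frobenius comparison, which relies on two ingredients already built into the hypotheses, namely that $G$ is connected (so that $A(G)$ is irreducible) and that at least one crossing edge is present (ensured by $k \geq p-1 \geq 1$).
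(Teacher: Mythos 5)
Your proposal is correct and follows essentially the same route as the paper: the lower bound comes from the proper spanning subgraph $K_{n-p+1}\cup(p-1)K_1$ via Perron--Frobenius, and the upper bound from Hong's inequality applied to $e(G)=\binom{n-p+1}{2}+k$, with the same numerical comparison $2n>2k+p$. Your explicit justification of strictness in the lower bound (irreducibility of $A(G)$ plus the existence of a crossing edge from $k\geq p-1\geq 1$) is a welcome elaboration of what the paper leaves implicit.
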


\renewcommand\proofname{\bf Proof}
\begin{proof}
Note that $G$ contains $K_{n-p+1}\cup (p-1)K_{1}$ as a proper spanning subgraph. Then $\rho(G)>\rho(K_{n-p+1}\cup (p-1)K_{1})= n-p$. On the other hand,
\begin{equation*}
\begin{aligned}
   e(G)={n-p+1\choose 2}+k.
\end{aligned}
\end{equation*}
Combining this with $n\geq 2k+2$, $2\leq p\leq k+1$ and Lemma \ref{lem::4.1}, we have
\begin{equation*}
\begin{aligned}
   \rho(G)\leq \sqrt{2e(G)-n+1}=\sqrt{(n-p+1)^2-(2n-p-2k)}<n-p+1.
\end{aligned}
\end{equation*}
This completes the proof.
\end{proof}

Recall that $H^{a,b}_{n}$ is the graph obtained from $K_{a-b+2}\vee(K_1\cup K_{n-a-1})$ by attaching $b-2$ pendent edges to some vertex in $K_{a-b+2}$, where $a\geq b-1$.
\begin{lem}\label{lem::4.3}
Let $G\in\mathcal{K}_{n,p-1}^{k}$ where $n\geq 2k+2$ and $k\geq p\geq 2$. Then
$$\rho(G)\leq \rho(H^{k,p}_{n}),$$
with equality if and only if $G\cong H^{k,p}_{n}$.
\end{lem}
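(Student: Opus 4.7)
The plan is to start from an extremal $G\in\mathcal{K}_{n,p-1}^{k}$ and reduce it to $H^{k,p}_n$ via a sequence of spectral-radius-increasing moves, each an instance of Lemma~\ref{lem::2.2}. Partition $V(G)=H\cup I$ with $G[H]=K_{n-p+1}$ and $I=\{u_1,\dots,u_{p-1}\}$ the independent set of originally isolated vertices, labelled so that the Perron eigenvector $x$ of $A(G)$ satisfies $x(u_1)\ge\cdots\ge x(u_{p-1})$. Connectivity of $G$ forces $d(u_i)\ge 1$ for every $i$, and $\sum_{i=1}^{p-1}d(u_i)=k$. Since $k\ge p>p-1$, some $d(u_i)\ge 2$; together with the maximality of $x(u_1)$ this forces $d(u_1)\ge 2$ (if $d(u_1)=1$ then any $u_j$ with $d(u_j)\ge 2$ satisfies $|N(u_j)\setminus N(u_1)|\ge 1$, and Lemma~\ref{lem::2.2} gives a strict improvement).

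The first step concentrates the cross edges on $u_1$: if $u_i$ ($i\ge 2$) has $d(u_i)\ge 2$ and some $w\in N(u_i)\setminus N(u_1)$, the shift $u_iw\to u_1w$ permitted by Lemma~\ref{lem::2.2} keeps the graph in $\mathcal{K}_{n,p-1}^{k}$ (since $u_i$ retains a neighbour) and strictly raises $\rho$, contradicting extremality; therefore $N(u_i)\subseteq N(u_1)$ whenever $d(u_i)\ge 2$. Next, let $w^\sharp\in H$ attain $\max_{w\in H} x(w)$. For any degree-one $u_i$ with $N(u_i)=\{w_i\}\ne\{w^\sharp\}$, Lemma~\ref{lem::2.2} applied with $(u,v)=(w^\sharp,w_i)$, $v_1=u_i$ (valid because $u_i\in N(w_i)\setminus N(w^\sharp)$ and $x(w^\sharp)\ge x(w_i)$) strictly increases $\rho$, which is impossible. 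Hence every degree-one $u_i$ is attached to $w^\sharp$. A symmetric swap, applied to $u_j$ of degree $\ge 2$ together with any $w'\in N(u_j)$ and $w''\in H\setminus N(u_j)$ with $x(w'')\ge x(w')$, forces $N(u_j)$ to consist of the $d(u_j)$ vertices of $H$ with the largest $x$-values; in particular $w^\sharp\in N(u_j)\subseteq N(u_1)$.

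The main obstacle is to rule out a second high-degree vertex. Suppose for contradiction that $u_2\in I$ has $d(u_2)\ge 2$. By the previous step, $N(u_1)$ and $N(u_2)$ are the $d(u_1)$- and $d(u_2)$-prefixes of $H$ ordered by $x$, both containing $w^\sharp$, and $d(u_1)+d(u_2)+(p-3)=k$. Choose $W\subseteq H\setminus N(u_1)$ with $|W|=d(u_2)-1$, which fits because $n\ge 2k+2$ gives $|H\setminus N(u_1)|\ge n-p+1-(k-p+2)=n-k-1\ge k+1$, and set
\[
G':=G-\{u_2w:w\in N(u_2)\setminus\{w^\sharp\}\}+\{u_1w':w'\in W\}.
\]
Then $G'\in\mathcal{K}_{n,p-1}^{k}$ and is precisely $H^{k,p}_n$ (the vertex $u_1$ now has degree $k-p+2$, and $u_2,\dots,u_{p-1}$ are pendants at $w^\sharp$). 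To establish $\rho(G')>\rho(G)$, I plan to write down the equitable quotient matrices of $G$ and $G'$---each with only a handful of cells---and compare their characteristic polynomials at $\lambda=\rho(G)$, in the spirit of the proof of Lemma~\ref{lem::3.4}. The narrow localisation $n-p<\rho(G),\rho(G')<n-p+1$ provided by Lemma~\ref{lem::4.2} confines $\lambda$ to a one-unit window in which the relevant polynomial differences can be shown to be sign-definite, yielding the strict inequality. A single Lemma~\ref{lem::2.2} shift or a naive Rayleigh estimate does \emph{not} suffice here, because the conversion swap moves $d(u_2)-1$ edges from ``top'' (high-$x$) vertices of $H$ to ``bottom'' ones, so the first-order change in $x^{T}A\,x$ need not be positive; bridging this is the principal difficulty.

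Once this step is settled, $d(u_1)=k-p+2$ and every $u_i$ with $i\ge 2$ is a pendant at $w^\sharp\in N(u_1)$, which forces $G\cong H^{k,p}_n$. Because every non-trivial move above strictly increases $\rho$, the inequality $\rho(G)\le\rho(H^{k,p}_n)$ is strict except when $G\cong H^{k,p}_n$, giving the uniqueness of the extremal graph.
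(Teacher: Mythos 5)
Your structural reduction tracks the paper's own proof closely: you partition $V(G)$ into the clique and the independent set, use Lemma~\ref{lem::2.2} to force nested neighbourhoods, attach every degree-one independent vertex to a common maximum-weight clique vertex $w^\sharp$, and make each high-degree independent vertex see a top-prefix of the clique containing $w^\sharp$. The difficulty is that you stop exactly where the real content of the lemma begins. The step you yourself flag as ``the principal difficulty'' --- showing that redistributing the cross edges so that a single independent vertex carries $k-p+2$ of them strictly increases $\rho$ --- is not proved; it is only announced as a plan to compare characteristic polynomials of equitable quotient matrices on the window $(n-p,\,n-p+1)$. No such comparison is carried out, and it is not routine: as you correctly observe, the exchange moves edges from high-$x$ to low-$x$ clique vertices, so the sign of the change is genuinely unclear a priori. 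The paper resolves precisely this point with a page of estimates: it derives $x(u_{t+1})\ge 2x(u_2)/(\rho-(n-p-2))$ from the eigenvalue equations of the extremal graph, solves explicitly for the Perron-vector entries $y(v_1),y(v_2),y(u_2),y_1$ of $H^{k,p}_{n}$, and then shows $y^{T}(A(G'')-A(G'))x>0$ using the localisation $n-p<\rho,\rho'<n-p+1$ from Lemma~\ref{lem::4.2} together with $n\ge 2k+2$ and $t\le k-p+1$. Without an argument of comparable substance, your proof is incomplete at its crux.

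A secondary gap: your contradiction move strips the excess edges of a single vertex $u_2$ and asserts the result ``is precisely $H^{k,p}_n$,'' which presupposes that $u_2$ is the only other independent vertex of degree at least $2$. If several $u_i$ have degree at least $2$, your $G'$ is not $H^{k,p}_n$, and the equitable partition of the extremal $G$ can have a number of cells growing with $p$ (one cell per distinct prefix in the chain $N(u_1)\supseteq N(u_2)\supseteq\cdots$), so the ``handful of cells'' quotient-matrix computation does not obviously close. The paper sidesteps this by performing one global exchange $G''=G'-E_2+E_1$ that removes all excess edges from $v_2,\dots,v_{p-1}$ simultaneously and lands on $H^{k,p}_n$ in a single step, so only one spectral comparison is ever needed.
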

\renewcommand\proofname{\bf Proof}
\begin{proof}
Suppose that $G'$ is a connected graph that attains the maximum spectral radius in $\mathcal{K}_{n,p-1}^{k}$ where $n\geq 2k+2$ and $k\geq p\geq 2$. For every $G\in \mathcal{K}_{n,p-1}^{k}$, we have
\begin{equation}\label{equ::8}
\begin{aligned}
\rho(G)\leq \rho(G').
\end{aligned}
\end{equation}
We partition $V(G')$ into $V_1\cup V_2$ with $V_1=V((p-1)K_{1})=\{v_1,v_2,\ldots,v_{p-1}\}$ and $V_2=V(K_{n-p+1})=\{u_{1},u_2,\ldots,u_{n-p+1}\}$. Let $x$ be the Perron vector of $A(G')$, and let $\rho=\rho(G')$. Without loss of generality, we assume that $x(v_{i+1})\leq x(v_{i})$ and $x(u_{j+1})\leq x(u_{j})$ for $1\leq i\leq p-2$ and $1\leq j\leq n-p$. Since $G'$ is a connected graph and $V_1$ is an independent set, it follows that $d_{G'}(v_i)=d_{V_2}(v_i)\geq 1$ for $1\leq i\leq p-1$. We first assert that $u_1\in N_{V_2}(v_i)$ for $1\leq i\leq p-1$. Otherwise, there exists some vertex, say $v_j$ ($1\leq j\leq p-1$), such that $u_1\notin N_{V_2}(v_j)$. Let $u_t\in N_{V_2}(v_j)$ where $2\leq t\leq n-p+1$, and let $G^*=G'-u_tv_j+u_1v_j$. Then $G^{*}\in\mathcal{K}_{n,p-1}^{k}$ and $\rho(G^{*})>\rho(G')$ by Lemma \ref{lem::2.2}, which contradicts the maximality of $\rho$. This implies that $u_1\in N_{V_2}(v_i)$ for all $1\leq i\leq p-1$. We next assert that $N_{G'}(v_{i+1})\subseteq N_{G'}(v_i)$ and $N_{G'}(u_{j+1})\subseteq N_{G'}[u_j]$ for $1\leq i\leq p-2$ and $1\leq j\leq n-p$. If not, suppose that there exist $i,j$ with $i<j$ such that $N_{G'}(v_j)\nsubseteq N_{G'}(v_i)$. Let
$w\in N_{G'}(v_j)\backslash N_{G'}(v_i)$ and $G^{**}=G'-wv_{j}+wv_{i}$. By considering that $u_1\in N_{G'}(v_i)\cap N_{G'}(v_j)$, we have $G^{**}\in\mathcal{K}_{n,p-1}^{k}$. Note that $x(v_{j})\leq x(v_{i})$. Then $\rho(G^{**})>\rho(G')$ by Lemma \ref{lem::2.2}, which contradicts the maximality of $\rho$. This implies that $N_{G'}(v_{i+1})\subseteq N_{G'}(v_i)$ for $1\leq i\leq p-2$. Similarly, we can deduce that $N_{G'}(u_{j+1})\subseteq N_{G'}[u_j]$ for $1\leq j\leq n-p$. Furthermore, we have
$N_{V_1}(u_{j+1})\subseteq N_{V_1}(u_j)$ for $1\leq j\leq n-p$.
Let $d_{V_2}(v_1)=t$. Again by the maximality of $\rho$ and Lemma \ref{lem::2.2}, we can obtain that $N_{V_2}(v_1)=\{u_1,u_2,\dots,u_t\}$. Recall that $d_{V_2}(v_i)\geq 1$ where $2\leq i\leq p-1$ and $\sum_{i=1}^{p-1}d_{V_2}(v_i)=k$. Then
$$t=d_{V_2}(v_1)=\sum_{i=1}^{p-1}d_{V_2}(v_i)-\sum_{i=2}^{p-1}d_{V_2}(v_i)\leq k-p+2.$$
If $t=k-p+2$, then $N_{V_2}(v_1)=\{u_1,u_2,\cdots, u_{k-p+2}\}$ and $N_{V_2}(v_i)=\{u_1\}$ for $2\leq i\leq p-1$, and hence $G'\cong H^{k,p}_{n}$. Combining this with (\ref{equ::8}), we can deduce that
$$\rho(G)\leq\rho(H^{k,p}_{n}),$$
with equality if and only if $G\cong H^{k,p}_{n}$, as required. Next we consider $t\leq k-p+1$ in the following. Note that $x(u_2)\geq x(u_i)$ for $3\leq i\leq t$, $x(u_{t+1})=x(u_{i})$ for $t+2\leq i\leq n-p+1$ and $x(v_2)\geq x(v_{i})$ for $3\leq i\leq p-1$. Then, by $A(G')x=\rho x$, we have
\begin{eqnarray*}
\rho x(u_{t+1}) =x(u_1)+\sum_{i=2}^{t}x(u_i)+(n-p-t)x(u_{t+1})\geq 2x(u_2)+(n-p-2)x(u_{t+1}).
\end{eqnarray*}
Note that $G'\in\mathcal{K}_{n,p-1}^{k}$. Then $\rho> n-p$ by Lemma \ref{lem::4.2}, and hence
\begin{eqnarray}
 x(u_{t+1})\geq\frac{2x(u_2)}{\rho-(n-p-2)}. \label{equ::9}
\end{eqnarray}
Assume that $E_1=\{v_1u_{i}|~t+1\leq i\leq k-p+2\}$ and $E_2=\{v_{i}u_j\in E(G') |~ 2\leq i\leq p-1, 2\leq j\leq t\}$. Let $G''=G'-E_2+E_1$. Clearly, $G''\cong H^{k,p}_{n}$. Let $y$ be the Perron vector of $A(G'')$, and let $\rho'=\rho(G'')$. By symmetry, we have $y(v_2)=y(v_i)$ for $3\leq i\leq p-1$, $y(u_2)=y(u_i)$ for $3\leq i\leq k-p+2$, and $y_1=y(u_i)$ for $k-p+3\leq i\leq n-p+1$. By $A(G'')y=\rho' y$, we have
\begin{eqnarray*}
\rho'y(v_2)&\!\!=\!\!&y(u_1),\\
\rho'y(v_1)&\!\!=\!\!&y(u_1)+(k-p+1)y(u_2),\\
\rho'y_1&\!\!=\!\!&y(u_1)+(k-p+1)y(u_2)+(n-k-2)y_1,
\end{eqnarray*}
from which we obtain that
\begin{equation}\label{equ::10}
\left\{
\begin{aligned}
  y(v_2)&=\frac{y(u_1)}{\rho'},\\
 y(u_2)&=\frac{\rho'y(v_1)-y(u_1)}{k-p+1},\\
 y_{1}&=\frac{\rho'y(v_1)}{\rho'-(n-k-2)}.
  \end{aligned}
\right.
\end{equation}
Notice that
\begin{eqnarray}
 \rho'y(u_1)&\!\!=\!\!&y(v_1)+(p-2)y(v_2)+(k-p+1)y(u_2)+(n-k-1)y_1.\label{equ::11}
\end{eqnarray}
Putting (\ref{equ::10}) into (\ref{equ::11}), we obtain that
\begin{eqnarray} \label{equ::12}
 y(v_1)=\frac{(\rho'^2+\rho'-(p-2))(\rho'-(n-k-2))}{\rho'(\rho'^2+2\rho'-(n-k-2))}y(u_1).
\end{eqnarray}
As $G',G''\in \mathcal{K}_{n,p-1}^{k}$, we get $\rho'\!-\!\rho>-1$ by Lemma \ref{lem::4.2}. Note that $x(v_2)\geq x(v_i)$ and $x(u_2)\geq x(u_j)$ for $3\leq i\leq p-1$ and $3\leq j\leq t$. Combining this with (\ref{equ::9}), (\ref{equ::10}), (\ref{equ::12}) and $\rho'\!-\!\rho>-1$, we have

\begin{equation*}
\begin{aligned}
&~~~y^{T}(\rho'-\rho)x \\
&=y^{T}(A(G'')-A(G'))x\\
   &=\sum_{v_1u_i\in E_1}(x(v_{1})y(u_{i})\!+\!x(u_{i})y(v_{1}))\!
-\sum_{v_iu_j\in E_2}\!(x(v_{i})y(u_{j})\!+\!x(u_{j})y(v_{i}))\\
   &\geq(k\!-\!p-t+2)(x(v_1)y(u_{2})+x(u_{t+1})y(v_1)-x(v_2)y(u_2)-x(u_2)y(v_2))\\
   &\geq(k\!-\!p-t+2)(x(u_{t+1})y(v_1)-x(u_2)y(v_2))~(\mbox{since $x(v_1)\geq x(v_2)$})\\
   &\geq(k\!-\!p-t+2)x(u_{2})y(u_1)\left(\frac{2(\rho'^{2}+\rho'\!-\!(p\!-\!2))(\rho'\!-\!(n\!-\!k\!-\!2))}
   {\rho'(\rho\!-\!(n\!-\!p\!-\!2))(\rho'^{2}\!+\!2\rho'\!-\!(n\!-\!k\!-\!2))} \!-\!\frac{1}{\rho'} \right)\\
   &~~(\mbox{by (\ref{equ::9}), (\ref{equ::10}) and (\ref{equ::12})})\\
   &=\frac{(k-p-t+2)x(u_{2})y(u_1)}{\rho'(\rho\!-\!(n\!-\!p\!-\!2))(\rho'^{2}\!+\!2\rho'\!-\!(n\!-\!k\!-\!2))}
   (2(\rho'^{2}+\rho'\!-\!(p\!-\!2))(\rho'\!-\!(n\!-\!k\!-\!2))\\
   &~~~-(\rho\!-\!(n\!-\!p\!-\!2))(\rho'^{2}\!+\!2\rho'\!-\!(n\!-\!k\!-\!2)))\\
        \end{aligned}
\end{equation*}
   \begin{equation*}
\begin{aligned}
    &=\frac{(k-p-t+2)x(u_{2})y(u_1)}{\rho'(\rho\!-\!(n\!-\!p\!-\!2))(\rho'^{2}\!+\!2\rho'\!-\!(n\!-\!k\!-\!2))}
   ((\rho'\!-\!(n\!-\!k\!-\!2))(\rho'^2\!+\!n\!-\!2p\!-\!k\!+\!2)\\
   &~~~\!+\!(\rho'\!-\!\rho\!+\!k\!-\!p\!+\!1)(\rho'^2\!+\!2\rho'\!-\!(n\!-\!k\!-\!2))\!-
   \!(\rho'^2\!+\!2\rho'\!-\!(n\!-\!k\!-\!2)))\\
   & >\frac{(k\!-\!p\!-\!t\!+\!2) [(\rho'\!-\!(n\!-\!k\!-\!2))(\rho'^2\!+\!n\!-\!2p\!-\!k\!+\!2)\!-\!(\rho'^2\!+\!2\rho'\!-\!(n\!-\!k\!-\!2))]x(u_{2})y(u_1)}{\rho'(\rho\!-\!(n\!-\!p\!-\!2))(\rho'^{2}\!+\!2\rho'\!-\!(n\!-\!k\!-\!2))}
  \\
   &~~~(\mbox{since $t\leq k-p+1$, $\rho'-\rho>-1$, $k\geq p$ and $\rho'>n-p\geq n-k$})\\
   &>\frac{(k\!-\!p\!-\!t\!+\!2)x(u_{2})y(u_1)}{\rho'(\rho\!-\!(n\!-\!p\!-\!2))(\rho'^{2}\!+\!2\rho'\!-\!(n\!-\!k\!-\!2))} [2(\rho'^2\!+\!n\!-\!2p\!-\!k\!+\!2)\!-\!(\rho'^2\!+\!2\rho'\!-\!(n\!-\!k\!-\!2))]
  \\
  &~~~(\mbox{since $\rho'>n-p\geq n-k$})\\
  &=\frac{(k\!-\!p\!-\!t\!+\!2)x(u_{2})y(u_1)}{\rho'(\rho\!-\!(n\!-\!p\!-\!2))(\rho'^{2}\!+\!2\rho'\!-\!(n\!-\!k\!-\!2))}
    (\rho'^2-2\rho'+3n-3k-4p+2)\\
   &\geq\frac{(k\!-\!p\!-\!t\!+\!2)x(u_{2})y(u_1)}{\rho'(\rho\!-\!(n\!-\!p\!-\!2))(\rho'^{2}\!+\!2\rho'\!-\!(n\!-\!k\!-\!2))}
    (\rho'^2\!-\!2\rho'\!-\!k\!+\!8)~~(\mbox{since $n\geq 2k+2$ and $k\geq p$})\\
    &> 0~~(\mbox{since $\rho>n-p$, $n\geq 2k+2$ and $\rho'> n-k\geq k+2$}),
\end{aligned}
\end{equation*}
and hence $\rho'>\rho$, which contradicts the maximality of $\rho$.
This completes the proof.
\end{proof}

\begin{lem}\label{lem::4.4}
Let $a$ and $b$ be two positive integers. If $a\geq b$, then
$${a\choose 2}+{b\choose 2}< {a+1\choose 2}+{b-1\choose 2}.$$
\end{lem}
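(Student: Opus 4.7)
The plan is to subtract the left-hand side from the right-hand side of the claimed inequality and verify that the difference is strictly positive. Using the elementary identity $\binom{n+1}{2}-\binom{n}{2}=n$, I would compute
\begin{equation*}
\left(\binom{a+1}{2}+\binom{b-1}{2}\right)-\left(\binom{a}{2}+\binom{b}{2}\right) = \left(\binom{a+1}{2}-\binom{a}{2}\right) - \left(\binom{b}{2}-\binom{b-1}{2}\right) = a-(b-1).
\end{equation*}
Since $a\geq b$ by hypothesis, this difference equals $a-b+1\geq 1$, which immediately yields the strict inequality.

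Conceptually, the statement expresses the strict convexity of the function $f(x)=\binom{x}{2}=x(x-1)/2$ on the nonnegative integers: moving one unit from the smaller of two parameters to the larger one (when the larger is already at least as big) strictly increases the sum of $f$-values. I expect no real obstacle here; the only mild care needed is to note that the binomial coefficients remain well-defined in the boundary case $b=1$, where $\binom{b-1}{2}=\binom{b}{2}=0$ and the difference simply reduces to $a\geq 1$. Accordingly, the proof is essentially a one-line arithmetic verification, and no auxiliary lemma from the paper is needed.
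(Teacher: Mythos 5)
Your computation is correct and matches the paper's own proof exactly: both evaluate the difference $\binom{a+1}{2}+\binom{b-1}{2}-\binom{a}{2}-\binom{b}{2}=a-b+1>0$ and conclude immediately. The extra remark about the boundary case $b=1$ is harmless but not needed.
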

\begin{proof}
Note that $a\geq b$. Then
\begin{equation*}
\begin{aligned}
&{a+1\choose 2}+{b-1\choose 2}\!-\!{a\choose 2}\!-\!{b\choose 2}=a-b+1>0.
\end{aligned}
\end{equation*}
Thus the result follows.
\end{proof}

For $X\subseteq V(G)$, let $G[X]$ be the subgraph of $G$ induced by $X$, and let $e(X)$ be the number of edges in $G[X]$.

\renewcommand\proofname{\bf Proof of Theorem \ref{thm::1.3}}
\begin{proof}
Suppose that $G$ is a connected graph of order $n\geq 2\kappa'_l+2$ attaining the maximum spectral radius among all
connected graphs with $l$‐edge‐connectivity $\kappa'_l$.
Thus, there exists some $E_1\subseteq E(G)$ with $|E_1|=\kappa'_{l}$ such that $G-E_1$ contains $p$ components $G_1, G_2,\cdots, G_p$ ($p\geq l$).
Observe that $H^{\kappa'_l,l}_{n}$ is a connected graph with $l$‐edge‐connectivity $\kappa'_l$ and $K_{n-l+1}$ is a proper subgraph of $H^{\kappa'_l,l}_{n}$. Thus,
\begin{eqnarray} \label{equ::13}
\rho(G)\geq \rho(H^{\kappa'_l,l}_{n})>\rho(K_{n-l+1})=n-l.
\end{eqnarray}
Let $|V(G_i)|=n_i$ for $1\leq i\leq p$.
Without loss of generality, we assume that $n_1\geq n_2\geq\cdots\geq n_{p}$. We first assert that $n_i=1$ for $2\leq i\leq p$. Otherwise, there exists some $j$ ($2\leq j\leq p$) such that $n_j\geq 2$. Hence, by Lemma \ref{lem::4.4}, we obtain that
$$e(G)=\sum_{i=1}^{p}e(G_i)+\kappa'_l\leq\sum_{i=1}^{p}{n_i\choose 2}+\kappa'_l\leq{n-p\choose 2}+{2\choose 2}+\kappa'_l={n-p\choose 2}+\kappa'_l+1.$$
Note that $G$ is connected. Then $p\leq \kappa'_l+1$. Combining this with Lemma \ref{lem::4.1}, $n\geq 2\kappa'_l+2$, $p\geq l$ and $\kappa'_l\geq 1$, we get
$$\rho(G)\leq \sqrt{2e(G)-n+1}=\sqrt{(n-p)^2-(2n-p-2\kappa'_l-3)}< n-p\leq n-l,$$
which contradicts (\ref{equ::13}). This implies that $n_1=n-p+1$ and $n_i=1$ for $2\leq i\leq p$. Suppose that $V(G_1)=\{u_1,u_2,\cdots,u_{n-p+1}\}$ and $V(G_i)=\{v_i\}$ where $2\leq i\leq p$.  By the maximality of $\rho(G)$, we can deduce that $G_1\cong K_{n-p+1}$. Let $x$ be the Perron vector of $A(G)$ and $V_2=\{v_2,v_3,\cdots,v_p\}$. Without loss of generality, we assume that $x(v_2)=\max\{x(v)|~v\in V_2\}$ and $x(u_1)\geq x(u_2)\geq\cdots\geq x(u_{n-p+1})$. Since $G$ is a connected graph, we have $e(V_2)+\sum_{i=3}^{p}d_{G_1}(v_i)\geq p-2$. Combining this with $e(V_2)+\sum_{i=2}^{p}d_{G_1}(v_i)=\kappa'_l$, we deduce that $d_{G_1}(v_2)\leq \kappa'_l-p+2$. By $A(G)x=\rho(G)x$, we have
\begin{eqnarray*}
\rho(G)x(u_{n-p+1})&\!\!\geq\!\!&\sum_{i=1}^{n-p}x(u_i),\\
\rho(G)x(v_2)&\!\! =\!\!& \sum_{v\in N_{V_2}(v_2)} x(v)+\sum_{v\in N_{G_1}(v_2)}x(v)\leq (p-2)x(v_2)+\sum_{i=1}^{\kappa'_l-p+2}x(u_i),
\end{eqnarray*}
from which we get
$$(\rho(G)\!-\!p\!+\!2)(x(u_{n-p+1})\!-\!x(v_2))\geq \sum_{i=1}^{n-2p+2}x(u_i)-\sum_{i=1}^{\kappa'_l-p+2}x(u_i)=\sum_{i=\kappa'_l-p+3}^{n-2p+2}x(u_i)
> 0 $$
due to $n\geq 2\kappa'_l+2$ and $p\leq \kappa'_l+1$. Since $G$ contains $K_{n-p+1}$ as a proper subgraph,  it follows that
$$\rho(G)>\rho(K_{n-p+1})=n-p\geq 2\kappa'_l+2-p\geq p,$$
where the inequalities follow from the fact that $n\geq 2\kappa'_l+2$ and $p\leq \kappa'_l+1$, and hence $x(u_{n-p+1})> x(v_2)$.
This suggests that $x(u_{i})> x(v_2)$ for $1\leq i\leq n-p+1$. We next assert that $u_1v_{j}\in E(G)$ for $2\leq j\leq p$. Otherwise,
let $U$ denote the set in $V_2$ consisting of all vertices that are not adjacent to $u_1$ and let $|U|=t$. Then $t\geq 1$.
As $G$ is connected, there exists $E_2\subseteq E(G)$ with $|E_2|=t$ formed by taking each vertex in $U$ as an endpoint.
Take $G^*=G-E_2+\{u_1v| v\in U\}$. It is easy to verify that $G^*$ is a connected graph with $l$-edge-connectivity $\kappa'_{l}$. Notice that $x(u_1)\geq x(v)$ for $v\in V(G)\backslash\{u_1\}$. Thus, $\rho(G^*)>\rho(G)$ by Lemma \ref{lem::2.2}, which also leads to a contradiction. This implies that $u_1\in N_{G_1}(v_j)$ for $2\leq j\leq p$. If $p=\kappa'_{l}+1$, then $G\cong H^{\kappa'_{l},p}_{n}$. Then we consider $p\leq \kappa'_{l}$ in the following. Since $x(u_i)> x(v_2)\geq x(v_j)$ for $1\leq i\leq n-p+1$ and $3\leq j\leq p$, again by the maximality of $\rho(G)$ and Lemma \ref{lem::2.2}, we can obtain that $e(V_2)=0$. It follows that $V_2$ is an independent set. Therefore, $G\in \mathcal{K}_{n,p-1}^{\kappa'_{l}}$. Combining this with Lemma \ref{lem::4.3}, we can deduce that $G\cong H^{\kappa'_{l},p}_{n}$. Furthermore, we claim that $p=l$. If not, $p\geq l+1$. Then $\rho(G)<n-p+1\leq n-l$ by Lemma \ref{lem::4.2}, which contradicts (\ref{equ::13}). This implies that $G\cong H^{\kappa'_{l},l}_{n}$, as required.

This completes the proof.\end{proof}

\section{Concluding remarks}

Theorem \ref{thm::1.3} indicates that $H^{\kappa'_l,l}_{n}$ is the unique graph that has the maximum spectral radius among all graphs of order $n\geq 2\kappa'_l+2$ with $l$‐edge‐connectivity $\kappa'_l$. Incorporating the minimum degree as a parameter naturally leads us to consider the following question.
\begin{prob}\label{prob::5.1}
Let $G$ be a connected graphs of order $n$ with $l$-edge-connectivity $\kappa'_l$ and minimum degree $\delta$. Determine the maximum spectral radius of $G$ and characterize the extremal graph(s).
\end{prob}
The results in \cite{Fan-Gu-Lin,Ning} provided the preliminary work of Problem \ref{prob::5.1} for $l=2$. However, the structure of connected graphs of order $n$ with $l$‐edge‐connectivity and minimum degree $\delta$ is more complicated for $l\geq 3$, and it seems difficult to determine the extremal graphs. This is left for possible further work.

The \textit{l-arc-connectivity} of a strongly connected digraph $D$ on at least $l$ vertices is defined as the minimum number of arcs are required to be deleted from $D$ to produce a digraph with at least $l$ strongly connected components. Lin and Drury \cite{Lin-Drury} characterized the extremal digraphs which attain the maximum spectral radius of digraphs with 2-arc-connectivity. Naturally, we consider the following question.

\begin{prob}\label{prob::5.2}
Let $l\geq 3$ and $D$ be a strongly connected digraph of order $n$ with a given $l$-arc-connectivity. Determine the maximum spectral radius of $D$ and characterize the extremal digraph(s).
\end{prob}

\section*{Acknowledgements}
Dandan Fan was supported by the National Natural Science Foundation of China (Grant No. 12301454) and sponsored by Natural Science Foundation of Xinjiang Uygur Autonomous Region(Grant Nos. 2022D01B103), Xiaofeng Gu was supported by a grant from the Simons Foundation (522728), and Huiqiu Lin was supported by the National Natural Science Foundation of China (Grant No. 12271162), Natural Science Foundation of Shanghai (No. 22ZR1416300) and The Program for Professor of Special Appointment (Eastern Scholar) at Shanghai Institutions of Higher Learning (No. TP2022031).

\end{document}